\title{Interval Exchange Transformations and Low-Discrepancy}
\author{Christian Weiß}
\date{\today}
\newtheorem{thm}{Theorem}[section]
\newtheorem{defi}[thm]{Definition}
\newtheorem{prop}[thm]{Proposition}
\newtheorem{cor}[thm]{Corollary}
\newtheorem{exa}[thm]{Example}
\newcommand{\Id}{{\rm{Id}}}
\newcommand{\RR}{{\mathbb{R}}}
\newcommand{\ZZ}{{\mathbb{Z}}}
\newcommand{\NN}{{\mathbb{N}}}
\newcommand{\QQ}{{\mathbb{Q}}}
\begin{document} 

\maketitle

\begin{abstract}
	In \cite{Mas82} and \cite{Vee78} it was proved independently that almost every interval exchange transformation is uniquely ergodic. The Birkhoff ergodic theorem implies that these maps mainly have uniformly distributed orbits. This raises the question under which conditions the orbits yield low-discrepancy sequences. The case of $n=2$ intervals corresponds to circle rotation, where conditions for low-discrepancy are well-known. In this paper, we give corresponding conditions in the case $n=3$. Furthermore, we construct infinitely many interval exchange transformations with low-discrepancy orbits for $n \geq 4$. We also show that these examples do not coincide with $LS$-sequences if $S \geq 2$.
\end{abstract}


\section{Introduction} Low-discrepancy sequences are an essential tool for high dimensional numerical integration. Three families of low-discrepancy sequences are classically used for that purpose, namely Kronecker sequences, digital sequences and Halton sequences (compare \cite{Lar14}, see also \cite{Nie92}). From an ergodic point of view Kronecker sequences are of particular interest: they are realized as orbits of circle rotations which are in turn the simplest examples of interval exchange transformations (two intervals). This raises the question whether further examples of low-discrepancy sequences can be constructed by interval exchange transformations with $n > 2$ intervals.\\[12pt]
In this paper, we at first draw a complete picture for interval exchange transformations with three intervals and give criteria when their orbits yield low-discrepancy sequences. For most combinatorial data also the case $n=3$  corresponds to usual circle rotations but there exists a specific choice of combinatorial data which is different. We draw our attention solely to the latter class of examples. These can be regarded as the first return maps of a circle rotations on some interval $[0,\lambda)$ with $\lambda > 0$ to $[0,1)$. Therefore, our approach consists in considering low-discrepancy sequences in arbitrary intervals and their restriction to $[0,1)$.\\[12pt]
Inspired by the algebraic relations used to define $LS$-sequences (see \cite{Car12}), we then present a canonical construction of further low-discrepancy sequences as orbits of interval exchange transformations with at least four intervals. These examples are similar to Kronecker sequences and give further insight to the interaction of low-discrepancy and ergodicity which has been recently discussed e.g. in \cite{GHL12}, \cite{CIV14} and \cite{Wei17}.

\section{Discrepancy Theory}

\paragraph{(Star-)Discrepancy in arbitrary intervals.} Usually, the concept of discrepancy is applied to sequences $S=(\omega_n)_{n \geq 0}$ in $[0,1)^d$. It may, however, be generalized to arbitrary intervals $I = [\alpha_1,\beta_1) \times \ldots [\alpha_d,\beta_d)$ with $- \infty < \alpha_i < \beta_i < \infty$ for all $i = 1, \ldots, d$ in the following way: let $S = (\omega_n)_{n \geq 0}$ be a sequence in $I$. The \textbf{star-discrepancy} of its first $N$ points is given by
$$D_{N,I}^{*}(S) = \sup_{B^* \subset I} \left| \frac{A_N(B^*,S)}{N} - \frac{\lambda_d(B^*)}{\lambda_d(I)} \right|,$$
where the supremum is taken over all intervals of the form $B^*=[\alpha_1,b_1)\times \ldots \times [\alpha_d,b_d)$ with $\alpha_i < b_i < \beta_i$ for all $i$ and where $\lambda_d$ denotes the $d$-dimensional Lebesgue-measure and $A_N(B,S) := \# \left\{ n \ \mid \ 0 \leq n \leq N, \omega_n \in B \right\}$. Note that the definition is consistent with the usual definition of star-discrepancy for $I=[0,1)^d$ and that we still have $0 \leq D_{N,I}^{*}(S) \leq 1$. Whenever we consider the $d$--dimensional unit-interval we leave away $I$ in the definition of star-discrepancy, $D_N^*(S)$. The following proposition asserts that the star-discrepancy of a sequence is invariant under scaling.
\begin{prop} \label{prop:scaling} Let $S=(\omega_n)_{n \geq 0}$ be a sequence in $I$ with discrepancy $D_{N,I}^*(S)$. Then the component-wise scaled sequence $$\widetilde{S} := \left(-\alpha_1 + \frac{1}{\beta_1 - \alpha_1} z_{1,n}, \ldots, -\alpha_d + \frac{1}{\beta_d - \alpha_d} z_{d,n} \right)_{n \geq 0}$$
has discrepancy $D_N^*(\widetilde{S}) = D_{N,I}^*(S)$ in $[0,1)^d$.
\end{prop}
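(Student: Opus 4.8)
The plan is to show that the affine bijection $\varphi\colon I \to [0,1)^d$ given component-wise by $\varphi_i(t) = (t-\alpha_i)/(\beta_i-\alpha_i)$ sets up a perfect correspondence between the boxes anchored at the lower-left corner of $I$ and the boxes anchored at the origin of $[0,1)^d$, under which both the counting term and the volume term in the definition of $D^*_{N,I}$ are preserved exactly. Note first that $\widetilde S = (\varphi(\omega_n))_{n\ge 0}$ in the notation of the statement (writing $\omega_n = (z_{1,n},\dots,z_{d,n})$), so what we must compare is $D^*_{N,I}(S)$ with $D^*_N(\widetilde S)$.

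First I would observe that $\varphi$ maps each interval $B^* = [\alpha_1,b_1)\times\cdots\times[\alpha_d,b_d) \subset I$ with $\alpha_i < b_i < \beta_i$ onto the interval $\widetilde{B} = [0,c_1)\times\cdots\times[0,c_d)$ with $c_i = (b_i-\alpha_i)/(\beta_i-\alpha_i) \in (0,1)$, and that $b_i \mapsto c_i$ is an increasing bijection from $(\alpha_i,\beta_i)$ onto $(0,1)$. Hence $B^* \mapsto \widetilde B$ is a bijection between the two families of test boxes appearing in the two supremum definitions. Second, since $\varphi$ is a bijection, $\omega_n \in B^*$ if and only if $\varphi(\omega_n) = \widetilde\omega_n \in \widetilde B$, so $A_N(B^*,S) = A_N(\widetilde B,\widetilde S)$ for every such pair. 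Third, because $\varphi$ is the product of the linear maps $t\mapsto t/(\beta_i-\alpha_i)$ composed with translations, it scales $d$-dimensional Lebesgue measure by the constant factor $\prod_i (\beta_i-\alpha_i)^{-1} = 1/\lambda_d(I)$; applying this to $B^*$ gives $\lambda_d(\widetilde B) = \lambda_d(B^*)/\lambda_d(I)$, and applying it to $I$ itself gives $\lambda_d([0,1)^d) = 1$, so $\lambda_d(B^*)/\lambda_d(I) = \lambda_d(\widetilde B)/\lambda_d([0,1)^d)$.

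Combining these three points, for each test box $B^*\subset I$ the quantity $\bigl|A_N(B^*,S)/N - \lambda_d(B^*)/\lambda_d(I)\bigr|$ equals $\bigl|A_N(\widetilde B,\widetilde S)/N - \lambda_d(\widetilde B)/\lambda_d([0,1)^d)\bigr|$, and as $B^*$ ranges over all admissible boxes in $I$, $\widetilde B$ ranges over all admissible boxes in $[0,1)^d$. Taking suprema on both sides yields $D^*_{N,I}(S) = D^*_N(\widetilde S)$, which is the claim. There is essentially no analytic obstacle here; the only point requiring a little care is the bookkeeping that the correspondence $B^* \leftrightarrow \widetilde B$ is genuinely a bijection onto the full admissible family on each side (so that neither supremum is taken over a strictly smaller set), and that the anchored lower corners $\alpha_i$ respectively $0$ are matched correctly by $\varphi$ — once this is pinned down, the equality of the discrepancies is immediate.
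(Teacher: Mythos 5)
Your argument is correct and is essentially the paper's own proof: both exploit the same affine correspondence between anchored test boxes (the paper passes from a box $B^*\subset[0,1)^d$ to the box $C^*\subset I$, you go in the opposite direction), noting that the point counts agree and that the normalized volumes $\lambda_d(C^*)/\lambda_d(I)$ and $\lambda_d(B^*)$ coincide, and then taking the supremum. You also implicitly read the scaled sequence as $\bigl((z_{i,n}-\alpha_i)/(\beta_i-\alpha_i)\bigr)_i$, which is the intended meaning of the (slightly garbled) formula in the statement, so no further comment is needed.
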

\begin{proof} This can be proved by a direct calculation. For an interval $B^* = [0,b_1) \times \ldots \times [0,b_d) \subset [0,1)^d$ we set $C^*=[\alpha_1,\alpha_1 + b_1(\beta_1 - \alpha_1)) \times \ldots \times [\alpha_d, \alpha_d + b_d(\beta_d - \alpha_d)) )$. Thus we have
\begin{align*}
	\left| \frac{A_N(B^*,\widetilde{S})}{N} - \lambda(B^*) \right| & = \left| \frac{A_N(B^*,\widetilde{S})}{N} - \frac{\lambda(C^*)}{(\beta_1-\alpha_1) \cdot \ldots \cdot (\beta_d - \alpha_d)} \right|\\
	& = \left| \frac{A_N(C^*,S)}{N} - \frac{\lambda(C^*)}{\lambda(I)} \right|.
\end{align*}	
Taking the supremum over all intervals $B^*$ implies the claim.
 \end{proof} 
If we restrict to the case $d=1$ and if $D_N^*(S)$ satisfies 
$$D_{N}^*(S) = O(N^{-1}\log N)$$
then $S \subset [0,1)$ is called a \textbf{low-discrepancy sequence}. In dimension one this is the best possible rate of convergence as it was proved in \cite{Sch72}, that there exists a constant $c$ with
	$$D_N^*(S) \geq c N^{-1} \log N.$$
The constant $c$ fulfills $0.06 < c < 0.223$ but its precise value is still unknown (see e.g. \cite{Lar14}). 
A theorem of Weyl states that a sequence of points $S=(\omega_n)_{n \geq 0}$ is uniformly distributed if and only if
$$\lim_{N \to \infty} D_{N}^*(S) = 0.$$
Thus, the only candidates for low-discrepancy sequences are uniformly distributed sequences. For a discussion of the situation in higher dimensions see e.g. \cite{Nie92}, Chapter~3. 
 \begin{cor} \label{cor:scaling} Let $S=(\omega_n)_{n \geq 0}$ be a sequence in $I=[\alpha_1,\beta_1)$. Then we have $D_{N,I}^*(S) \geq c N^{-1} \log N$ with $0.06 < c < 0.223$.
 \end{cor}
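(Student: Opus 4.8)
The plan is to deduce this immediately from Proposition~\ref{prop:scaling} combined with Schmidt's lower bound recalled just above the statement. First I would specialize Proposition~\ref{prop:scaling} to $d=1$: given a sequence $S=(\omega_n)_{n \geq 0}$ in $I=[\alpha_1,\beta_1)$, the component-wise rescaled sequence
$$\widetilde{S} = \left( -\alpha_1 + \frac{1}{\beta_1-\alpha_1}\,\omega_n \right)_{n \geq 0}$$
lies in $[0,1)$ and satisfies $D_N^*(\widetilde{S}) = D_{N,I}^*(S)$ for every $N$. This is the only real input; it is an exact equality of discrepancies, valid for all $N$ simultaneously, so any inequality established for $\widetilde{S}$ transfers verbatim to $S$.

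Next I would invoke Schmidt's theorem \cite{Sch72} as stated in the excerpt, applied to $\widetilde{S} \subset [0,1)$: there is an absolute constant $c$ with $0.06 < c < 0.223$ such that $D_N^*(\widetilde{S}) \geq c N^{-1}\log N$. Substituting the identity from the previous step yields $D_{N,I}^*(S) \geq c N^{-1}\log N$, which is exactly the assertion of the corollary. The constant is literally the same one, since the affine normalization is a bijection and does not affect the extremal behaviour of the discrepancy.

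There is essentially no obstacle here — the statement is a one-line consequence of Proposition~\ref{prop:scaling}. The only point worth being careful about is that the scaling identity is an equality rather than just an inequality, so no loss is incurred in the constant $c$. One could also add the remark that this shows the rate $O(N^{-1}\log N)$ remains optimal for sequences in an arbitrary bounded interval, so that the notion of a low-discrepancy sequence transports from $[0,1)$ to any interval $I$ through Proposition~\ref{prop:scaling} without changing the meaning of "best possible".
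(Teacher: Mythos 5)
Your argument is exactly the one the paper intends (and leaves implicit): rescale via Proposition~\ref{prop:scaling} to a sequence in $[0,1)$, apply Schmidt's lower bound from \cite{Sch72}, and transfer back through the equality $D_N^*(\widetilde{S}) = D_{N,I}^*(S)$. This is correct and complete; the only cosmetic point is that the normalizing map should read $\omega_n \mapsto (\omega_n-\alpha_1)/(\beta_1-\alpha_1)$, an inaccuracy inherited from the statement of Proposition~\ref{prop:scaling} itself and irrelevant to the argument.
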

 Thus it makes sense to speak of low-discrepancy sequences in an arbitrary interval $I=[\alpha_1,\beta_1) \times \ldots \times [\alpha_d,\beta_d)$. Accordingly, the \textbf{discrepancy} of the first $N$ points of a sequence $S=(\omega_n)_{n \geq 0}$ in $I$ is defined by
 $$D_{N,I}(S) := \sup_{B \subset [0,1)} \left| \frac{A_N(B,S)}{N} - \frac{\lambda_d(B)}{\lambda_d(I)} \right|,$$
 where the supremum is taken over all subintervals $B = [a_1,b_1) \times \ldots \times [a_d,b_d)$ with $\alpha_i \leq a_i < b_i  < \beta_i$ for all $i$. It is straightforward to see that $D_{N,I}^*(S) \leq D_{N,I}(S) \leq 2^d D_{N,I}^*(S)$.
 \paragraph{Low-discrepancy in subintervals.} The low-discrepancy property of a sequence is often regarded as \textit{being as uniformly distributed as possible}. We aim to make this statement more precise here. Let $(\omega_n)_{n \geq 0}$ be a low-discrepancy sequence in the interval $I$ and let $I_1 \subset I$ be a half-open subinterval with its left endpoint included. Since $(\omega_n)_{n \geq 0}$ is uniformly distributed there are infinitely many $\omega_i$ in $I_1$. Hence the numbers
 $$i_0 := \min \left\{ j \in \NN_0 | \omega_j \in I_1 \right\}, \quad i_j := \min \left\{ j \in \NN_0 | j > i_{j-1}, \omega_j \in I_1 \right\} \ (j \geq 1)$$
 form an infinite sequence $(i_j)_{j \geq 0}$ in $\NN_0$. As a consequence, the subsequence $(\omega^1_j)_{j \geq 0} := (\omega_{i_j})_{j \geq 0}$ is well-defined and lies in $I_1$. It is called the \textbf{restriction of $(\omega_n)_{n \geq 0}$ to $I_1$}. By this construction low-discrepancy is preserved.
 \begin{prop} \label{prop:subsequence} Let $I_1 \subset I$ be an arbitrary half-open subinterval and let $(\omega_n)_{n \geq 0}$ be a low-discrepancy sequence in $I$. Then the restriction $(\omega^1_n)_{n \geq 0}$ is a low-discrepancy sequence in $I_1$. 
 \end{prop}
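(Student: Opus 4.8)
The plan is to reduce to the one-dimensional case $d=1$ (the one actually used later in the paper; for general $d$ the argument below goes through verbatim with $d$-dimensional boxes), and to bound the \emph{ordinary} discrepancy $D_{M,I_1}(S^1)$, which by $D^*\le D$ dominates the quantity we have to control. Via Proposition~\ref{prop:scaling} one may additionally assume $I=[0,1)$, but this is not needed. Set $q:=\lambda_d(I_1)/\lambda_d(I)\in(0,1]$, and for a subinterval $J\subseteq I_1$ write $p(J):=\lambda_d(J)/\lambda_d(I)$, so that $\lambda_d(J)/\lambda_d(I_1)=p(J)/q$ and $0\le p(J)\le q$. By the low-discrepancy hypothesis and the comparison $D_{N,I}(S)\le 2^d D_{N,I}^*(S)$ from the excerpt, we have $D_{N,I}(S)=O(N^{-1}\log N)$.

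The crucial point is a bookkeeping identity that is independent of the test interval. Fix $M$ and let $N=N(M):=i_{M-1}$ be the position in $S$ of the $M$-th term of the restriction $S^1$ (finite because $S$ is uniformly distributed, hence visits $I_1$ infinitely often). Among $\omega_0,\dots,\omega_N$ the terms lying in $I_1$ are precisely $\omega_{i_0},\dots,\omega_{i_{M-1}}$, so $A_N(I_1,S)=M$; and since every point of $J\subseteq I_1$ is automatically selected into $S^1$, the count $A_M(J,S^1)$ of the first $M$ terms of $S^1$ in $J$ equals $A_N(J,S)$ (with the obvious convention for the counting function of a finite initial segment; the off-by-one in the index is immaterial). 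Hence
$$\frac{A_M(J,S^1)}{M}-\frac{\lambda_d(J)}{\lambda_d(I_1)}=\frac{A_N(J,S)}{A_N(I_1,S)}-\frac{p(J)}{q}.$$

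Next I would estimate the right-hand side uniformly in $J$. Writing $a:=A_N(J,S)/N$, $m:=M/N=A_N(I_1,S)/N$ and $\delta_N:=D_{N,I}(S)$, applying the discrepancy of $S$ to the test intervals $J$ and $I_1$ gives $|a-p(J)|\le\delta_N$ and $|m-q|\le\delta_N$. From the algebraic identity $\tfrac{a}{m}-\tfrac{p}{q}=\tfrac{(a-p)q-p(m-q)}{mq}$ together with $0\le p(J)\le q$ one obtains $\bigl|\tfrac{a}{m}-\tfrac{p(J)}{q}\bigr|\le\tfrac{2\delta_N}{m}=\tfrac{2N\delta_N}{M}$, a bound that does not depend on $J$. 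Since $S$ is uniformly distributed, $m=A_N(I_1,S)/N\to q>0$, so for all large $M$ we have $N\le(2/q)M$, whence $N\delta_N=O(\log N)=O(\log M)$ and therefore $D_{M,I_1}(S^1)=\sup_{J\subseteq I_1}\bigl|\tfrac{A_M(J,S^1)}{M}-\tfrac{\lambda_d(J)}{\lambda_d(I_1)}\bigr|=O(M^{-1}\log M)$. Combined with $D^*_{M,I_1}(S^1)\le D_{M,I_1}(S^1)$ this is the assertion.

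The only step with real content is the control of $N=N(M)$: the estimate collapses unless $N$ grows at most linearly in $M$, equivalently unless $m=M/N$ stays bounded away from $0$. This is exactly where uniform distribution of the ambient sequence enters — it forces the density of visits of $S$ to $I_1$ to converge to the positive number $q$, so the gaps between consecutive visits cannot accumulate and the denominator $m$ remains under control. Everything else is routine: the off-by-one in the definition of $A_N$, the passage to half-open test intervals whose endpoints may meet $\partial I$ (handled by the standard approximation argument), and the asymptotic nature of the $O$-estimate, which allows us to ignore the finitely many small values of $M$.
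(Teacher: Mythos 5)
Your proof is correct and follows essentially the same route as the paper: you pick $N$ so that exactly $M$ of the first $N$ terms of $S$ lie in $I_1$, exploit the identity $A_M(J,S^1)=A_N(J,S)$ for $J\subseteq I_1$, split the error using the discrepancy of $S$ applied both to $J$ and to $I_1$, and control $N/M$ via the convergence of the visit frequency to $\lambda_d(I_1)/\lambda_d(I)>0$. The only (welcome) refinements over the paper's argument are that you work with the ordinary discrepancy to handle non-anchored test intervals cleanly and that you make explicit why $\log N=O(\log M)$.
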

 \begin{proof}  In order to facilitate notation we leave away the indexes of sequences in this proof. 
  The low-discrepancy of $\omega$ in $I$ implies
  \begin{align} \label{eq1}
  \left| \frac{A_N(\omega,I_1)}{N} - \lambda(I_1) \right| \leq c \frac{\log N}{N}
  \end{align}
  for all $N \in \NN$ with fixed $c \in \RR$ independent of $N$. Now let $N_1 \in \NN$ be arbitrary and choose $N \geq N_1$ such that $N_1 = {A_N(\omega,I_1)}$. We need to bound the following quantity
 \begin{align*}
 \widetilde{D}_{N_1,I^*}^* & := \sup_{I^* \subset I_1} \left| \frac{A_{N_1}(\omega^1,I^*)}{N_1} - \frac{\lambda(I^*)}{\lambda(I_1)}\right|\\
 & \leq \sup_{I^* \subset I_1} \left| \frac{A_{N_1}(\omega^1,I^*)}{N_1} - \frac{N\lambda(I^*)}{N_1}\right| + \sup_{I^* \subset I_1} \left| \frac{N\lambda(I^*)}{N_1} - \frac{\lambda(I^*)}{\lambda(I_1)}\right|.
 \end{align*} 
 For every subinterval $I^* \subset I_1$ we have
 \begin{align*}
 A_{N_1}(\omega^1,I^*) = A_{N}(\omega,I^*)
 \end{align*}
 yielding
 \begin{align*}
 \widetilde{D}_{N_1,I^*}^* & \leq \frac{N}{N_1} \sup_{I^* \subset I_1} \left| \frac{A_{N}(\omega,I^*)}{N} - \lambda(I^*)\right| + \frac{N}{N_1} \sup_{I^* \subset I_1} \frac{\lambda(I^*)}{\lambda(I_1)} \left| \lambda(I_1) - \frac{N_1}{N}\right|\\
 & =  \frac{N}{N_1} \sup_{I^* \subset I_1} \left| \frac{A_{N}(\omega,I^*)}{N} - \lambda(I^*)\right| + \frac{N}{N_1} \sup_{I^* \subset I_1} \frac{\lambda(I^*)}{\lambda(I_1)} \left| \frac{A_N(\omega,I_1)}{N} - \lambda(I_1) \right|\\
 & \stackrel{\eqref{eq1}}{\leq} \frac{N}{N_1} c \frac{\log N}{N} + \frac{N}{N_1} \cdot 1 \cdot c \frac{\log N}{N}.
 \end{align*}
 Therefore, it suffices to bound $N/N_1$. We show here that for sufficiently large $N$ the expression $N_1/N$ is arbitrarily close to a positive constant. Indeed, this claim follows from 
 \begin{align*}
	 \left| \frac{N_1}{N} - \lambda(I_1) \right| = \left| \frac{A_N(\omega,I_1)}{N} - \lambda(I_1) \right| \leq c \frac{\log N}{N}.
 \end{align*}
 since $\lambda(I_1)$ is some positive constant that does not depend on $N$ and $\log N / N$ converges to zero. This completes the proof. 
 \end{proof}
 
  \paragraph{Kronecker sequences.} A classical example of low-discrepancy sequences are \textbf{Kronecker sequences}: given $z \in \RR$, let $\left\{ z \right\} := z - \lfloor z \rfloor$ denote the fractional part of $z$. A Kronecker sequence is a sequence of the form $(z_n)_{n \geq 0} = (\left\{ n z \right\})_{n \geq 0}$. If $z \notin \QQ$ and $z$ has bounded partial quotients in its continued fraction expansion, the sequence $(z_n)$ has low-discrepancy in $[0,1)$. More precisely, the following statement holds:
    \begin{thm} \label{thm:kronecker} (\cite{DT97}, Corollary 1.65) Let $z \in \RR$ be irrational with continued fraction expansion $z =[a_0;a_1;a_2;\ldots]$. Then the sequence $(\left\{ n z \right\})_{n \geq 0}$ is low-discrepancy if and only if the moving average
    	$$a_m^{(1)} = \frac{1}{m} \sum_{j=1}^m a_j$$
    is a bounded sequence.  	
  \end{thm}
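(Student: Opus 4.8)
The plan is to reduce the statement to the classical description of the star--discrepancy of a Kronecker sequence via the $z$--Ostrowski numeration system. Write $p_k/q_k$ for the convergents of $z=[a_0;a_1;a_2;\ldots]$, so $q_0=1$ and $q_k=a_kq_{k-1}+q_{k-2}$, and recall $\phi^{\,k-1}\le q_k\le\prod_{i=1}^{k}(a_i+1)$ with $\phi=(1+\sqrt5)/2$. Every integer $N\ge 1$ has a unique Ostrowski representation $N=\sum_{k=0}^{m}c_{k+1}q_k$ with $0\le c_1<a_1$, $0\le c_{k+1}\le a_{k+1}$, and $c_k=0$ whenever $c_{k+1}=a_{k+1}$; put $\Sigma(N):=\sum_{k=0}^{m}c_{k+1}$ for its digit sum. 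The one non--elementary ingredient, which is exactly what is packaged in \cite{DT97}, Chapter~1, is the two--sided estimate
\begin{equation}\label{eq:ost}
N D_N^{*}\bigl((\{nz\})_{n\ge 0}\bigr)\ \asymp\ \Sigma(N)
\end{equation}
with absolute implied constants: the upper bound $N D_N^{*}\ll\Sigma(N)$ is valid for \emph{every} $N$, while for the lower bound one uses that at each scale $N$ can be chosen so that the contributions of the individual digit levels do not cancel (for instance an $N$ whose Ostrowski digits are maximal at every level, such as the $N_j$ below). Granting \eqref{eq:ost}, the theorem becomes a purely arithmetic statement about the partial quotients.

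First I would treat ``bounded moving average $\Rightarrow$ low--discrepancy''. Assume $a_m^{(1)}\le C$ for all $m$, i.e.\ $S_m:=\sum_{i=1}^{m}a_i\le Cm$. Given $N\ge 2$, pick $m$ with $q_m\le N<q_{m+1}$. Admissibility of the Ostrowski digits gives $\Sigma(N)\le\sum_{i=1}^{m+1}a_i=S_{m+1}\le C(m+1)$, while $N\ge q_m\ge\phi^{\,m-1}$ gives $m+1\le 2+\log N/\log\phi$. Hence by \eqref{eq:ost}, $N D_N^{*}\ll\Sigma(N)\ll m+1\ll\log N$, so $D_N^{*}=O(N^{-1}\log N)$ and $(\{nz\})_{n\ge 0}$ is low--discrepancy.

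For the converse, suppose $a_m^{(1)}$ is unbounded and choose $m_j\uparrow\infty$ with $A_j:=a_{m_j}^{(1)}=S_{m_j}/m_j\to\infty$; we may assume $A_j\ge 3$. For each $j$ set $N_j:=\sum_{k=1}^{m_j}(a_k-1)q_{k-1}$. Since no digit attains the value $a_k$, this string of digits is admissible, hence it is the Ostrowski representation of $N_j$ and $\Sigma(N_j)=S_{m_j}-m_j=m_j(A_j-1)\ge\tfrac12 m_jA_j$. Moreover $N_j<\sum_{k=1}^{m_j}a_kq_{k-1}=\sum_{k=1}^{m_j}(q_k-q_{k-2})<2q_{m_j}$, so by the arithmetic--geometric mean inequality applied to $q_{m_j}\le\prod_{k=1}^{m_j}(a_k+1)$,
\begin{align*}
\log N_j &\ \le\ \log 2+\sum_{k=1}^{m_j}\log(a_k+1)\ \le\ \log 2+m_j\log\Bigl(1+\tfrac{1}{m_j}S_{m_j}\Bigr)\\
&\ =\ \log 2+m_j\log(1+A_j)\ \le\ 2\,m_j\log A_j\qquad(A_j\ge 3).
\end{align*}
Combining this with the lower bound in \eqref{eq:ost} gives
\begin{equation*}
\frac{N_jD_{N_j}^{*}}{\log N_j}\ \gg\ \frac{\Sigma(N_j)}{\log N_j}\ \ge\ \frac{\tfrac12\,m_jA_j}{2\,m_j\log A_j}\ =\ \frac{A_j}{4\log A_j}\ \longrightarrow\ \infty\qquad(j\to\infty),
\end{equation*}
so $D_N^{*}$ is not $O(N^{-1}\log N)$ and $(\{nz\})_{n\ge 0}$ is not low--discrepancy. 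Together the two directions give the claim.

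The only substantial point is \eqref{eq:ost}, and I expect its lower bound to be the main obstacle. The upper bound is the familiar telescoping estimate: split the initial block $\{0,1,\ldots,N-1\}$ along the Ostrowski digits of $N$ into $c_{k+1}$ sub--blocks of length $q_k$ for each $k$, observe via the three--distance theorem that $(\{nz\})$ is nearly equidistributed on each such block (so it contributes $O(c_{k+1})$ to $ND_N^{*}$), and sum. The lower bound requires, for the integers $N_j$ above, producing a single test interval on which the point count deviates from the expected value by an amount of order $\Sigma(N_j)$ --- i.e.\ ruling out cancellation between the levels, which forces one to track the sign alternation of $\{q_kz\}$ carefully. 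This is precisely why I would quote \eqref{eq:ost} from \cite{DT97} rather than reprove it.
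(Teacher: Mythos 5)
Your proposal is correct and, in substance, follows the same route as the paper, which gives no proof of its own but simply quotes the statement as Corollary 1.65 of \cite{DT97}; that corollary is derived there exactly as you do, from the two-sided Ostrowski-digit estimate $N D_N^{*}\asymp\Sigma(N)$ (which in \cite{DT97} is in fact proved for \emph{all} $N$, the positive and negative excursions of the local discrepancy being controlled by the digits at even and odd levels respectively, so your hedge about needing specially chosen $N_j$ for the lower bound is unnecessary). Both of your reductions -- the bound $\Sigma(N)\le S_{m+1}\le C(m+1)\ll\log N$ in the forward direction, and the choice $N_j=\sum_{k\le m_j}(a_k-1)q_{k-1}$ with the Jensen estimate $\log q_{m_j}\le m_j\log(1+A_j)$ in the converse -- are sound.
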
  
  The sequence $(z_n)_{n \geq 0}$ corresponds to the rotation of the unit circle by the angle $2 \pi z$ if we identify $[0,1)$ with $\RR / \ZZ$. Furthermore, it is the simplest example of an interval exchange transformation (see Chapter~\ref{cha:iet}).
  
  \paragraph{LS-sequences.}  Another way to construct uniformly distributed sequences goes back to the work of Kakutani \cite{Kak76} and was later on generalized in \cite{Vol11} in the following sense. 
  \begin{defi}
  	Let $\rho$ denote a non-trivial partition of $[0,1)$. Then the \textbf{$\rho$-refinement} of a partition $\pi$ of $[0,1)$, denoted by $\rho \pi$, is defined by subdividing all intervals of maximal length positively homothetically to $\rho$. 
  \end{defi}
  The resulting sequence of partitions is denoted by $\left\{ \rho^n \pi \right\}_{n \in \NN}$. We now turn to a specific class of examples of $\rho$-refinement which was introduced in \cite{Car12}. 
  \begin{defi} 
  	Let $L \in \NN, S \in \NN_0$ and $\beta$ be the solution of $L \beta + S \beta^2 = 1$. An \textbf{$LS$-sequence of partitions} $\left\{ \rho_{L,S}^n \pi \right\}_{n \in \NN}$ is the successive $\rho$-refinement of the trivial partition $\pi = \left\{ [0,1) \right\}$ where $\rho_{L,S}$ consists of $L+S$ intervals such that the first $L$ intervals have length $\beta$ and the successive $S$ intervals have length $\beta^2$.
  \end{defi}
  The partition $\left\{ \rho_{L,S}^n \pi \right\}$ consists of intervals only of length $\beta^n$ and $\beta^{n+1}$. Its total number of intervals is denoted by $t_n$, the number of intervals of length $\beta^n$ by $l_n$ and the number of intervals of length $\beta^{n+1}$ by $s_n$. A specific ordering of the endpoints of the partition yields the $LS$-sequence of points. 
  \begin{defi} \label{def:ordering_LS} Given an $LS$-sequence of partitions $\left\{ \rho_{L,S}^n \pi \right\}_{n \in \NN}$, the corresponding \textbf{$LS$-sequence of points} $(\xi^n)_{n \in \NN}$ is defined as follows: let $\Lambda_{L,S}^1$ be the first $t_1$ left endpoints of the partition $\rho_{L,S} \pi$ ordered by magnitude. Given $\Lambda_{L,S}^n = \left\{ \xi_1^{(n)}, \ldots, \xi_{t_n}^{(n)} \right\}$ an ordering of $\Lambda_{L,S}^{n+1}$ is then inductively defined as
  	\begin{align*}
  	\Lambda_{L,S}^{n+1} = \left\{ \right.&  \xi_1^{(n)}, \ldots, \xi_{t_n}^{(n)},\\
  	&\left. \psi_{1,0}^{(n+1)} (\xi_1^{(n)}), \ldots, \psi_{1,0}^{(n+1)} (\xi_{l_n}^{(n)}), \ldots, \psi_{L,0}^{(n+1)} (\xi_1^{(n)}), \ldots, \psi_{L,0}^{(n+1)} (\xi_{l_n}^{(n)}), \right. \\
  	&\left. \psi_{L,1}^{(n+1)}	(\xi_1^{(n)}), \ldots, \psi_{L,1}^{(n+1)}	(\xi_{l_n}^{(n)}), \ldots, \psi_{L,S-1}^{(n+1)}	(\xi_1^{(n)}), \ldots, \psi_{L,S-1}^{(n+1)}	(\xi_{l_n}^{(n)}) \right\},
  	\end{align*}
  	where
  	$$\psi^{(n)}_{i,j}(x) = x + i\beta^n + j\beta^{n+1}, \qquad x \in \RR.$$
  \end{defi}
  As the definition of $LS$-sequences might not be completely intuitive at first sight, we illustrate it by an explicit example.
  \begin{exa} \label{exa:KF_sequence} For $L=S=1$ the $LS$-sequence is the so-called \textbf{Kakutani-Fibonacci sequence} (see \cite{CIV14}). We have
  	\begin{align*}
  	\Lambda^1_{1,1} & = \left\{ 0, \beta \right\}\\
  	\Lambda^2_{1,1} & = \left\{ 0, \beta, \beta^2 \right\}\\
  	\Lambda^3_{1,1} & = \left\{ 0, \beta, \beta^2, \beta^3, \beta + \beta^3 \right\}\\
  	\Lambda^4_{1,1} & = \left\{ 0, \beta, \beta^2, \beta^3, \beta + \beta^3, \beta^4,\beta+\beta^4,\beta^2+\beta^4  \right\}
  	\end{align*}
  	and so on.
  \end{exa}
  More precisely, $LS$-sequences are not only uniformly distributed but in many cases even low-discrepancy.
  \begin{thm}[Carbone, \cite{Car12}] If $L \geq S$, then the corresponding $LS$-sequence has low-discrepancy.
  \end{thm}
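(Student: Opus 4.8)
The plan is to bound the star-discrepancy first at the distinguished times $N = t_n$ and then to bootstrap to arbitrary $N$. Throughout one works with the combinatorial recursions $l_{n+1} = Ll_n + s_n$, $s_{n+1} = Sl_n$, $t_n = l_n + s_n$ and the length identity $l_n\beta^n + s_n\beta^{n+1} = 1$; the latter yields $t_n\beta^n \le 1 + \beta^{-1}$, $0 \le t_n\beta^{n+1} \le 1$ and $1 - t_n\beta^{n+1} = (1-\beta)l_n\beta^n$, and since $\beta^{-1}$ is the dominant root of $\lambda^2 = L\lambda + S$ one gets $t_n \asymp \beta^{-n}$, hence $n = O(\log t_n)$ because the ratios $t_{n+1}/t_n$ stay bounded.

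First I would handle $N = t_n$. By Definition~\ref{def:ordering_LS} the first $t_n$ terms of the $LS$-sequence are precisely the left endpoints of $\rho_{L,S}^n\pi$. Given $x \in [0,1)$, let $J$ be the union of the partition intervals contained in $[0,x)$, say $a$ of length $\beta^n$ and $b$ of length $\beta^{n+1}$; then $A_{t_n}([0,x)) = a + b + O(1)$ and $\lambda(J) \le x \le \lambda(J) + \beta^n$. Expanding $t_n\lambda(J) = (l_n+s_n)(a\beta^n + b\beta^{n+1})$ and simplifying with $l_n\beta^n + s_n\beta^{n+1} = 1$ gives
\begin{align*}
A_{t_n}([0,x)) - t_n x &= (1 - t_n\beta^{n+1})\Bigl(b - \tfrac{s_n}{l_n}a\Bigr) + O(1)\\
&= O(1)\cdot\Bigl(b - (a+b)\tfrac{s_n}{t_n}\Bigr) + O(1),
\end{align*}
where the $O(1)$'s absorb $t_n\beta^n = O(1)$, the possible extra endpoint, and the bounded factor $t_n/l_n$. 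Thus everything reduces to bounding, uniformly in $n$ and $x$, the deviation of the number of short intervals in an initial block of $\rho_{L,S}^n\pi$ from its expected proportion $s_n/t_n$.

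To control that deviation I would use the self-similarity of the refinement: subdividing a maximal interval of $\rho^{n-1}\pi$ turns it into the word $\mathsf L^{L}\mathsf S^{S}$ (write $\mathsf L$ for a factor of length $\beta^n$ and $\mathsf S$ for one of length $\beta^{n+1}$), while an interval of length $\beta^n$ of $\rho^{n-1}\pi$ survives as a single $\mathsf L$. Hence the word of interval lengths of $\rho_{L,S}^n\pi$ is $\sigma^n(\mathsf L)$ for the substitution $\sigma\colon \mathsf L\mapsto \mathsf L^{L}\mathsf S^{S},\ \mathsf S\mapsto \mathsf L$, whose incidence matrix $\begin{psmallmatrix} L & 1\\ S & 0\end{psmallmatrix}$ has eigenvalues $\beta^{-1}$ and $-S\beta$. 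The needed estimate $\bigl|\,b - (a+b)\tfrac{s_n}{t_n}\,\bigr| = O(1)$ is exactly the assertion that prefixes of the fixed point of $\sigma$ carry a bounded letter-count remainder, which holds precisely when the subdominant eigenvalue is small, $|{-S\beta}| < 1$, i.e. when $\sigma$ is Pisot. That condition is equivalent to the hypothesis: $S\beta < 1 \Leftrightarrow \beta^{-1} > S \Leftrightarrow L + \sqrt{L^2 + 4S} > 2S \Leftrightarrow L \ge S$ (squaring when $2S > L$). So for $L \ge S$ one obtains $D_{t_n}^*(\xi) = O(1/t_n)$ — in fact bounded remainder at the times $t_n$ — whereas for $L < S$ the large subdominant eigenvalue is what makes the statement fail.

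Finally, for general $N$ I would choose $n$ with $t_n \le N < t_{n+1}$ and read off the extra terms from Definition~\ref{def:ordering_LS}: after the $t_n$ partition endpoints come the shift-batches $\psi^{(n+1)}_{i,0}(\xi^{(n)}_1),\dots,\psi^{(n+1)}_{i,0}(\xi^{(n)}_{l_n})$ and then the $\psi^{(n+1)}_{L,j}$-batches, each a translate of the set of left endpoints of the $\beta^n$-intervals of $\rho_{L,S}^n\pi$ that remains inside those intervals. The same self-similarity argument bounds the contribution of every full batch to $|A_N([0,x)) - Nx|$ by $O(1)$, and the at most one incomplete batch is absorbed by recursing on the same estimate one scale lower; since $t_{n+1}/t_n$ is bounded this recursion has depth $O(n) = O(\log N)$, so summing the per-level $O(1)$ contributions gives $N D_N^*(\xi) = O(\log N)$. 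The single genuinely hard step is the constant bound on the prefix letter-count deviation in the third paragraph — once that is secured (either by hand for this explicit two-letter substitution, or from the Pisot-substitution machinery), the remainder is bookkeeping around the recursions for $l_n, s_n, t_n$.
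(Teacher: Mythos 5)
First, a point of orientation: the paper does not prove this statement at all --- it is quoted from Carbone \cite{Car12} --- so your argument can only be measured against Carbone's original proof, which is an elementary counting argument on the points of $\Lambda^n_{L,S}$ inside the intervals of the partitions, driven by the recursions $l_{n+1}=Ll_n+s_n$, $s_{n+1}=Sl_n$. Your route is genuinely different: you encode the length word of $\rho_{L,S}^n\pi$ as $\sigma^n(\mathsf{L})$ for the substitution $\sigma(\mathsf{L})=\mathsf{L}^L\mathsf{S}^S$, $\sigma(\mathsf{S})=\mathsf{L}$, reduce the discrepancy at the times $N=t_n$ to the prefix letter-count deviation of its fixed point, and identify the hypothesis $L\ge S$ with the Pisot condition $S\beta<1$ on the subdominant eigenvalue $-S\beta$. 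The algebra in this reduction is correct (the identity $1-t_n\beta^{n+1}=(1-\beta)l_n\beta^n$, the passage to $|b-(a+b)s_n/t_n|=O(1)$, and the harmless exchange of $s_n/t_n$ with the asymptotic frequency, at cost $O((S\beta)^n)$), and the equivalence $S\beta<1\Leftrightarrow S\le L$ checks out for integer parameters. What your approach buys is a conceptual explanation of \emph{why} $L\ge S$ enters (it is exactly bounded balance for a Pisot substitution), plus the stronger bounded-remainder bound $O(1/t_n)$ at the times $t_n$; Carbone's proof is more self-contained but less transparent on this point. Note, though, that your ``precisely when'' is only needed in one direction here; failure for $L<S$ is not part of the statement.

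To turn the sketch into a proof you must close three gaps, all fillable. (i) The balance lemma you flag: it does hold, e.g.\ by the Dumont--Thomass\'e prefix decomposition (every prefix of $\sigma^n(\mathsf{L})$ is a concatenation of $O(n)$ blocks $\sigma^k(u)$ with $u$ of bounded length, and each block contributes $O((S\beta)^k)$ to the letter-count deviation, summing geometrically precisely because $S\beta<1$), or by citing the known balance results for primitive substitutions with second eigenvalue of modulus less than one. (ii) Your general-$N$ bookkeeping silently uses two structural facts about Carbone's ordering that need an inductive verification: that $\Lambda^n_{L,S}$ is an initial segment of $\Lambda^{n+1}_{L,S}$, and that the first $l_n$ terms of $\Lambda^n_{L,S}$ are exactly the left endpoints of the $\beta^n$-intervals, so that each batch is a translate by $v=i\beta^{n+1}+j\beta^{n+2}$ staying inside those intervals (since $i\beta+j\beta^2<1$). (iii) The ``recurse one scale lower'' step for the incomplete batch needs the observation that translating the first $m\le l_n$ points by $v<\beta^n$ changes the count in $[0,x)$ by at most one, because these points are left endpoints of pairwise disjoint intervals of length $\beta^n$; with that, one gets $\sup_x|A_N([0,x))-Nx|\le \sup_x|A_m([0,x))-mx|+O(1)$ with $m<t_n$, and the claimed recursion depth $O(\log N)$ and the bound $ND_N^*=O(\log N)$ follow as you describe.
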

  It has been pointed out that for parameters $S=0$ and $L=b$, the corresponding $LS$-sequence coincides with the classical van der Corput sequences. In \cite{Wei17} it was proven that $LS$-sequences for $S=1$ coincide with symmetrized Kronecker sequences up to permutation  and that neither van der Corput sequences nor Kronecker sequences occur for $S \geq 2$.
  
  \section{Interval Exchange Transformations} \label{cha:iet}
  Let $I \subset \RR$ be an interval of the form $[0,\lambda^*)$ and let $\left\{ I_\alpha | \alpha \in \mathcal{A} \right\}$ be a finite partition of $I$ into sub-intervals indexed by some finite alphabet $\mathcal{A}$. An \textbf{interval exchange transformation} is a map $f: I \to I$ which is a translation on each subinterval $I_\alpha$. It is determined by its combinatorial data and its length data. The \textbf{combinatorial data} consists of two bijections $\pi_0, \pi_1: \mathcal{A} \to \left\{1,\ldots,n \right\}$, where $n$ is the number of elements of $\mathcal{A}$ and the \textbf{length data} are numbers $(\lambda_\alpha)_{\alpha \in A}$ with $\lambda_\alpha > 0$ and $\lambda^* = \sum_{\alpha \in \mathcal{A}} \lambda_\alpha$. The number $\lambda_\alpha$ is the length of the subinterval $I_\alpha$ and the pair $\pi = (\pi_0,\pi_1)$ describes the ordering of the subintervals before and after the map $f$ is iterated. In analytical terms, consider for $\epsilon \in \left\{ 0, 1 \right\}$ the maps $j_\epsilon$ which are given on $I_\alpha$ by
  \begin{align} \label{eq5}
  j_\epsilon(x) = x + \sum_{\pi_\epsilon(\beta) < \pi_\epsilon(\alpha)} \lambda_\beta.
  \end{align}
  The interval exchange transformation corresponding to the combinatorial and length data equals $f = j_1 \circ j_0^{-1}$. It can also be described completely by its \textbf{translation vector} $w$ with components
  $$w_\alpha = \sum_{\pi_1(\beta)<\pi_1(\alpha)} \lambda_\beta -  \sum_{\pi_0(\beta)<\pi_0(\alpha)} \lambda_\beta.$$
   Note that combinatorial data is \textit{not} uniquely determined by $f$ (see e.g. \cite{Via06}, Example~1.3). For $\mathcal{A} = \left\{A, B \right\}$ and $\pi_0(A) = \pi_1(B) = 1$ and $\pi_1(A) = \pi_0(B) = 2$ the interval exchange transformation becomes the rotation of $\RR / \lambda^* \ZZ$ by $\lambda_B$.\\[12pt]
  If the combinatorial data satisfies 
  \begin{align} \label{eq4}
  \pi_0^{-1}(\left\{1, \ldots, k \right\}) = \pi_1^{-1}(\left\{1, \ldots, k \right\})
  \end{align} 
  for some $k < d$, the interval exchange transformation splits into two interval exchange transformations of simpler combinatoric. The analysis of interval exchange transformations is therefore usually restricted to \textbf{admissible} combinatorial data, for which \eqref{eq4} does not hold for any $k < d$.  Further details on interval exchange transformation can be found e.g. in \cite{Mas82}, \cite{Via06} and \cite{Yoc06}.\\[12pt]  
   Recently, it has been discussed in some articles, see e.g. \cite{GHL12}, \cite{CIV14} and \cite{Wei17}, that there is a connection between interval exchange transformations and low-discrepancy. 
   \begin{thm} \textbf{(\cite{CIV14}, Theorem~17)} For $L=S=1$ the $LS$-sequence coincides with an orbit of an ergodic interval exchange transformation $T: [0,1) \to [0,1)$ consisting of infinitely many intervals.
   \end{thm}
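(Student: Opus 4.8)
The plan is to make the Kakutani--Fibonacci sequence explicit as an orbit of a concrete, infinitely-many-interval exchange transformation $T\colon [0,1)\to[0,1)$, and then to verify the two claims (consistency of the orbit with the $LS$-point sequence, and ergodicity of $T$) directly. First I would recall that for $L=S=1$ the defining equation is $\beta+\beta^2=1$, so $\beta=1/\varphi$ with $\varphi$ the golden ratio, and the partition $\rho_{1,1}^n\pi$ consists of intervals of length $\beta^n$ and $\beta^{n+1}$ only, whose counts are consecutive Fibonacci numbers. The map $\psi_{i,j}^{(n)}(x)=x+i\beta^n+j\beta^{n+1}$ from Definition~\ref{def:ordering_LS}, restricted to the relevant endpoints, will turn out to be (up to the Fibonacci reindexing) a single translation-type rule; the key observation is that the successor $\xi_{m+1}$ of $\xi_m$ in the $LS$-ordering is obtained from $\xi_m$ by adding a quantity that depends only on which interval of a countable partition $\xi_m$ lies in. Concretely, I would define $T$ on the countable partition of $[0,1)$ whose atoms are the ``new'' intervals created at each level and declare $T$ to act by the appropriate translation $x\mapsto x+i\beta^n+j\beta^{n+1}$ on each atom; this is an interval exchange transformation in the generalized (countably-infinite) sense used in the statement.

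Second, I would prove the orbit identity $\{\xi_m\}_{m\geq 0}=\{T^m(\xi_0)\}_{m\geq 0}$ with $\xi_0=0$ by induction on the level $n$. The inductive hypothesis is that the first $t_n$ points of the orbit of $0$ under $T$ are exactly $\Lambda_{1,1}^n$ in the order prescribed by Definition~\ref{def:ordering_LS}; the inductive step uses that the block structure of $\Lambda_{1,1}^{n+1}$ (the old points $\xi_1^{(n)},\dots,\xi_{t_n}^{(n)}$ followed by $\psi_{1,0}^{(n+1)}$ applied to the length-$\beta^n$ subset, then $\psi_{1,1}^{(n+1)}$ applied to the same subset) matches exactly the orbit segment produced by $T$ once one checks that $T$ sends the last point $\xi_{t_n}^{(n)}$ of level $n$ to the first new point $\psi_{1,0}^{(n+1)}(\xi_1^{(n)})$, and that $T$ restricted to the newly visited atoms reproduces the two $\psi$-blocks. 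This is the bookkeeping heart of the argument and where one must be careful that the partition underlying $T$ is a genuine refinement compatible across all levels, so that $T$ is well defined as a single map and not just level-by-level.

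Third, for ergodicity I would invoke unique ergodicity of the relevant structure: the sequence $(\xi_m)$ is known to be uniformly distributed (it is even low-discrepancy by Carbone's theorem since $L=S$, hence $L\geq S$), so Lebesgue measure is $T$-invariant and the orbit of $0$ equidistributes; combined with the self-similar (substitutive) structure of the Kakutani--Fibonacci partition one gets that every orbit equidistributes, which yields unique ergodicity and in particular ergodicity with respect to $\lambda_1$. Alternatively, and perhaps more cleanly, I would identify $T$ with the natural adic (Vershik) map on the Fibonacci Bratteli diagram, which is classically known to be uniquely ergodic, and transport that fact through the conjugacy with the endpoint ordering. I expect the main obstacle to be the second step: precisely reconciling the recursive, block-wise definition of the $LS$-ordering with a single globally defined translation-on-each-atom map, i.e.\ showing the local translation constants are consistent between levels and that no endpoint is visited twice, so that $T$ really is a bona fide (infinite) interval exchange transformation whose orbit of $0$ is the full $LS$-sequence of points.
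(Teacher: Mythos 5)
This statement is not proved in the paper at all: it is quoted verbatim as Theorem~17 of \cite{CIV14}, so there is no internal argument to compare against, and any proof has to reproduce (or replace) the construction of Carbone, Iac\`o and Vol\v{c}i\v{c}. Measured against that, your outline follows the same general strategy as the cited source -- realize the Kakutani--Fibonacci points as the orbit of $0$ under an explicitly defined exchange of countably many intervals, then settle ergodicity by relating the map to a classical uniquely ergodic system -- but as written it is a plan rather than a proof, and the gaps sit exactly at the load-bearing points. You never actually define $T$: the atoms of the countable partition and the translation constant on each atom are left unspecified, and without them the ``bookkeeping heart'' of step two (the induction showing that $T$ sends $\xi_{t_n}^{(n)}$ to $\psi_{1,0}^{(n+1)}(\xi_1^{(n)})$ and reproduces the two $\psi$-blocks, with translation constants consistent across levels) cannot even be started. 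You correctly flag this as the main obstacle, but flagging it is not resolving it; in \cite{CIV14} this is precisely the content of the construction.

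The ergodicity argument also has a genuine hole in its primary form. From the fact that the single orbit $(\xi_m)$ is uniformly distributed (Carbone's low-discrepancy theorem) you cannot conclude that Lebesgue measure is $T$-invariant, let alone ergodic or uniquely ergodic: equidistribution of one orbit says nothing about the behaviour of the other orbits, and the appeal to ``the self-similar structure'' is not an argument. Invariance of $\lambda$ should instead be immediate from $T$ being a piecewise translation whose image intervals tile $[0,1)$ -- which again requires the explicit definition you skipped. Your alternative route, identifying $T$ with the Fibonacci adic (Vershik) map or, as in \cite{CIV14}, exhibiting a conjugacy with the golden-ratio rotation and importing its (unique) ergodicity, is the right way to finish, but the conjugacy is asserted, not constructed, and constructing it (via the Zeckendorf/odometer coding of the orbit) is comparable in length to the orbit-identification step itself. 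So the proposal is a plausible roadmap that matches the known proof in outline, but both of its essential steps remain to be carried out.
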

   \begin{thm} \textbf{(\cite{Wei17}, Theorem~1.6)} For $S=1$ and arbitrary $L$ the $LS$-sequence coincides with an orbit of an interval exchange transformation $T: [0,1) \to [0,1)$ consisting of two intervals.
   \end{thm}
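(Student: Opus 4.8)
The plan is to show that the $LS$-sequence is the full (two-sided) orbit of a single rotation, presented as a two-interval exchange, and to make the coincidence precise at the level of the enumeration and not merely of the underlying point set. Let $\beta\in(0,1)$ be the root of $L\beta+\beta^2=1$. Dividing this relation by $\beta$ gives $1/\beta=L+\beta$, so $\beta=[0;\overline{L}]$ has a purely periodic continued fraction expansion with all partial quotients equal to $L$; in particular its partial quotients are bounded, so by Theorem~\ref{thm:kronecker} the Kronecker sequence $(\{m\beta\})_{m\ge0}$ has low discrepancy. Let $T=R_\beta$ be the rotation of $[0,1)$ by $\beta$, i.e.\ the interval exchange with the two intervals $[0,1-\beta)$ and $[1-\beta,1)$. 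Since $\{-\beta\}=1-\beta$ is an endpoint of the level-two partition but is never of the form $\{m\beta\}$ for $m\ge0$, the forward orbit of $0$ does not suffice; the correct object is the two-sided orbit $\mathcal{O}:=\{T^m(0):m\in\ZZ\}=\{\{m\beta\}:m\ge0\}\cup\{1-\{m\beta\}:m\ge1\}$, the passage to negative exponents $m\mapsto -m$ being exactly the reflection $\iota(x)=1-x$ responsible for the word ``symmetrised''.

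First I would prove the set-theoretic identity between the point set of the $LS$-sequence and $\mathcal{O}$. Using the recursion of Definition~\ref{def:ordering_LS} one checks inductively that every endpoint of $\rho_{L,1}^{\,n}\pi$ is a finite nonnegative $\beta$-combination $\sum_j c_j\beta^{\,j}$ whose digit string obeys the $LS$-admissibility pattern inherited from $\rho_{L,1}$. The relations $\beta^2=1-L\beta$ and $1/\beta=L+\beta$ let one rewrite any such combination uniquely as $\{m\beta\}$ or as $1-\{m\beta\}$; this is precisely the Ostrowski numeration attached to $\beta=[0;\overline{L}]$, and it furnishes a bijection between the $LS$-endpoints and $\ZZ$ via the exponent $m$. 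Both inclusions follow from the bijectivity of this numeration, giving $\bigcup_n\Lambda_{L,1}^{\,n}=\mathcal{O}$.

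The heart of the argument, and the step that upgrades set equality to a statement about the enumeration, is the construction of the listing. I would produce an explicit bijection $\sigma\colon\NN_0\to\ZZ$ with $\xi^{\,n}=T^{\sigma(n)}(0)$ for every $n$, where $(\xi^{\,n})$ is the $LS$-sequence of points; for $L=1$ its initial values are $0,1,-1,2,3,-3,-2,-4,5,6,4,7,8,\dots$. The bijection is built level by level: the passage $\rho_{L,1}^{\,n}\pi\mapsto\rho_{L,1}^{\,n+1}\pi$ is a single renormalisation step mirroring the period-one continued fraction of $\beta$, under which the maps $\psi_{i,0}^{(n+1)}$ are the $\beta^{n+1}$-scaled copies of the generating subdivision and the induced map of $T$ on $[0,\beta^{\,n})$ is affinely conjugate to $T$ itself. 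One shows that this conjugacy carries the block of endpoints appended to $\Lambda_{L,1}^{\,n+1}$, in the order prescribed by Definition~\ref{def:ordering_LS}, onto the corresponding block of exponents, thereby extending $\sigma$ and preserving its bijectivity.

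The main obstacle is precisely this order-matching. A priori the append-order of Definition~\ref{def:ordering_LS} and the exponent pattern of the two-sided orbit are unrelated, and reconciling them is the entire difficulty: one must verify that the renormalisation intertwines the combinatorial ordering of the $LS$-refinement with the dynamical ordering of $T$, so that $\sigma$ is well defined, exhausts $\ZZ$, and hits each exponent exactly once. Once this is established the theorem follows, and the low-discrepancy of $(\xi^{\,n})$ is recovered either from Theorem~\ref{thm:kronecker} together with the fact that reindexing into the two-sided orbit alters the discrepancy only through the controlled restriction mechanism of Proposition~\ref{prop:subsequence}, or directly from Carbone's theorem.
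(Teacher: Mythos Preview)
The paper does not prove this statement; it is quoted verbatim from \cite{Wei17} as background material, with no argument supplied here. There is therefore no proof in the present paper against which to compare your proposal.

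That said, your outline is on the right track and matches what the surrounding text of the paper indicates about \cite{Wei17}. You correctly identify the two-interval IET as the rotation $R_\beta$ with $\beta=[0;\overline{L}]$, and your observation that $1-\beta=\{-\beta\}$ lies in the $LS$-sequence but not in the forward orbit $\{T^m(0):m\ge0\}$ is exactly why the two-sided orbit (equivalently, the symmetrised Kronecker sequence) is needed. The set-theoretic identification via $\beta^2=1-L\beta$ and Ostrowski numeration is the natural mechanism, and the sample values of your reindexing map $\sigma$ for $L=1$ check out against Example~\ref{exa:KF_sequence}.

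One remark on scope: the phrase ``coincides with an orbit'' in the statement should be read loosely, and the paper itself says a few lines earlier that in \cite{Wei17} the $LS$-sequences for $S=1$ are shown to be symmetrised Kronecker sequences \emph{up to permutation}. So the order-matching step you flag as the main obstacle---constructing the explicit bijection $\sigma\colon\NN_0\to\ZZ$ compatible with Definition~\ref{def:ordering_LS}---is more than what the theorem as stated here demands. Establishing the set identity $\bigcup_n\Lambda_{L,1}^{\,n}=\{\,\{m\beta\}:m\in\ZZ\,\}$ already suffices; the renormalisation argument you sketch would be needed only for the sharper claim that the $LS$-enumeration is a computable reordering of the two-sided orbit, which is interesting but goes beyond the cited result.
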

   
   \paragraph{The case $n=3$.} We will show next that also the second simplest examples of interval exchange transformations, i.e. $n=3$, yield low-discrepancy sequences in $[0,1)$. In that case, the argumentation is based on Proposition~\ref{prop:subsequence}.\\[12pt]
   Assume that $\mathcal{A} = \left\{ A,B,C \right\}$. Without loss of generality we have $\pi_0(A) = 1, \pi_0(B) = 2, \pi_0(C) = 3$. Only $3$ of the $6$ bijections $\pi_1: \mathcal{A} \to \left\{1,2,3\right\}$ yield admissible combinatorial data. Two of these choices for $\pi_1$ yield again rotations on the circle $\RR/\lambda^* \ZZ$ (see e.g. \cite{Yoc06}) and therefore Kronecker sequences. The case we are interested in is thus (compare Figure 1)
   $$\pi_1(A) = 3,\pi_1(B)= 2, \pi_1(C) = 1.$$   
   According to \cite{Yoc06}, Section~2, the corresponding interval transformation $f: I \to I$ can be described as follows: let $\hat{I} :=[0,\lambda^* + \lambda_B]$ and $T: \hat{I} \to \hat{I}$ be defined as the rotation of $\RR / (\lambda^* + \lambda_B) \ZZ$ by $\lambda_B + \lambda_C$. Then for $y \in [0,\lambda_A)$ or $y \in [\lambda_A + \lambda_B, \lambda^*)$ we have $f(y) = T(y)$. For $y \in [\lambda_A, \lambda_A + \lambda_B)$ the identity $f(y)= T^2(y)$ holds. Therefore, $f$ is the first return map of $T$ in $I$. In other words, the sequence $(f^n(y))_{n \geq 0}$  is the restriction of $(T^n(y))_{n \geq 0}$ to $I$. From Proposition~\ref{prop:subsequence} and Theorem~\ref{thm:kronecker} we thus deduce.
    \begin{center}
   	\begin{tikzpicture}[scale=1.0]
   	\draw[thick] (0,0)--(8,0);     	
   	\draw[thick] (0,0)--(0,0.25);     	
   	\draw (0.75,0)--(0.75,0) node[above] {$A$};
   	\draw[thick] (1.5,0)--(1.5,0.25);
   	\draw (2.875,0)--(2.875,0) node[above] {$B$};
   	\draw[thick] (4.25,0)--(4.25,0.25);  	
   	\draw (6.125,0)--(6.125,0) node[above] {$C$};    	
   	\draw[thick] (8,0)--(8,0.25);
   	
   	\draw[thick] (0,-0.25)--(8,-0.25);    	
   	\draw[thick] (0,-0.25)--(0,-0.5);     	
   	\draw (1.875,-0.25)--(1.875,-0.25) node[below] {$C$};    	
   	\draw[thick] (3.75,-0.25)--(3.75,-0.5);  	
   	\draw (5.125,-0.25)--(5.125,-0.25) node[below] {$B$}; 
   	\draw[thick] (6.5,-0.25)--(6.5,-0.5);
   	\draw (7.25,-0.25)--(7.25,-0.25) node[below] {$A$}; 
   	\draw[thick] (8,-0.25)--(8,-0.5); 
   	
   	\end{tikzpicture}\\    	
   	Figure 1. Interval exchange transformation for $n=3$ \label{fig:exa1}
   \end{center}
   
   \begin{thm} \label{thm:n=3} Given $\mathcal{A} = \left\{ A,B,C \right\}$, the combinatorial data
   \begin{align*}
	   \pi_0(A) = 1, \pi_0(B) = 2, \pi_0(C) = 3, \quad \pi_1(A) = 3,\pi_1(B)= 2, \pi_1(C) = 1,
   \end{align*}
   and the length data $\lambda_A, \lambda_B, \lambda_C$, the corresponding interval exchange transformation $f: [0,\lambda^*) \to [0,\lambda^*)$ yields a low-discrepancy sequence $(f^n(y))_{n \geq 0}$ for all $y \in [0, \lambda^*)$ if and only if $\frac{\lambda_B+\lambda_C}{\lambda^* + \lambda_B}$ is irrational and has bounded moving average of its continued fraction expansion.  
   \end{thm}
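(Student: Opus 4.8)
The plan is to combine the three structural facts already assembled in the text: (i) the interval exchange transformation $f$ is the first return map of the rotation $T$ of $\RR/(\lambda^*+\lambda_B)\ZZ$ by $\lambda_B+\lambda_C$ to the subinterval $I=[0,\lambda^*)\subset\hat I=[0,\lambda^*+\lambda_B)$, so that the orbit $(f^n(y))_{n\geq 0}$ is precisely the restriction (in the sense defined before Proposition~\ref{prop:subsequence}) of the orbit $(T^n(y))_{n\geq 0}$ to $I$; (ii) Proposition~\ref{prop:subsequence}, which says the restriction of a low-discrepancy sequence to a half-open subinterval is again low-discrepancy; and (iii) Theorem~\ref{thm:kronecker} (via the scaling Proposition~\ref{prop:scaling}), which characterises exactly when the rotation orbit on $\hat I$ is low-discrepancy. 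For this last point, note that after rescaling $\hat I$ to $[0,1)$ the rotation $T$ becomes the circle rotation by the angle $\vartheta:=\frac{\lambda_B+\lambda_C}{\lambda^*+\lambda_B}$, and Proposition~\ref{prop:scaling} shows rescaling does not change the star-discrepancy; hence the orbit of $T$ has low-discrepancy on $\hat I$ if and only if $\vartheta$ is irrational with bounded moving average of its continued fraction partial quotients. Since the orbit of $T$ from any starting point $y$ is just a shift of the Kronecker sequence $(\{n\vartheta\})_{n\geq 0}$ by the constant $\{y/(\lambda^*+\lambda_B)\}$, and a constant shift changes the star-discrepancy only by a bounded factor (or one may invoke that Theorem~\ref{thm:kronecker} is insensitive to the starting point of the rotation), the criterion is independent of $y$.

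The forward direction is then immediate: if $\vartheta$ is irrational with bounded moving average, the rescaled rotation orbit is low-discrepancy in $[0,1)$ by Theorem~\ref{thm:kronecker}, so the orbit of $T$ is low-discrepancy in $\hat I$ by Proposition~\ref{prop:scaling}, and therefore its restriction $(f^n(y))_{n\geq 0}$ to $I$ is low-discrepancy in $I$ by Proposition~\ref{prop:subsequence}, for every $y\in[0,\lambda^*)$. One small thing to verify is that $I=[0,\lambda^*)$ is genuinely a half-open subinterval of $\hat I$ with its left endpoint included, which holds since $\lambda^*<\lambda^*+\lambda_B$, so Proposition~\ref{prop:subsequence} applies verbatim; and that the first-return description from \cite{Yoc06} is valid, which is exactly what is recalled just before the statement.

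For the converse I would argue contrapositively. Suppose $\vartheta$ is rational or has unbounded moving average. If $\vartheta$ is rational, then $T$ is periodic on $\hat I$, hence $f$ is periodic on $I$ (a first return map of a periodic map is periodic) and the orbit $(f^n(y))_{n\geq 0}$ is finite, so it is not even uniformly distributed, let alone low-discrepancy; in fact its star-discrepancy is bounded below by a positive constant. If $\vartheta$ is irrational with unbounded moving average, I want to propagate the lower bound on the discrepancy of the $T$-orbit down to the $f$-orbit. The cleanest route is to reverse the direction of Proposition~\ref{prop:subsequence}: the orbit of $T$ on $\hat I$ is obtained from the orbit of $f$ on $I$ by interleaving the finitely many (here, at most one per excursion) points of the $T$-orbit that land in $\hat I\setminus I=[\lambda^*,\lambda^*+\lambda_B)$, and each such excursion has bounded length since $\hat I\setminus I$ has positive measure and the $T$-orbit is uniformly distributed on $\hat I$; hence $N$ and the number $N_1=A_N(T\text{-orbit},I)$ of its first $N$ points lying in $I$ are comparable, $N\le c' N_1$ for a constant $c'$, and a test interval $B^*\subset I$ achieving large discrepancy for the $f$-orbit at level $N_1$ can be transported to a test interval $B^*\subset\hat I$ witnessing discrepancy of the $T$-orbit at level $N$ of comparable size. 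Since by Theorem~\ref{thm:kronecker} the $T$-orbit fails low-discrepancy, so does the $f$-orbit.

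The main obstacle is this converse comparison: Proposition~\ref{prop:subsequence} is stated only one way (restriction preserves low-discrepancy), so I need the quantitative estimate that the restriction does not \emph{improve} the discrepancy rate — equivalently that passing to the first return map cannot destroy an $\Omega(N^{-1}\log N)$ lower bound. This rests on two facts that must be made precise: that the return time of $T$ to $I$ is uniformly bounded (here it is at most $2$, by the explicit description $f=T$ or $f=T^2$ recalled before the theorem, so $N_1\ge N/2$), and that counting functions $A_N$ for subintervals of $I$ are literally unchanged when passing between the two orbits (this is the identity $A_{N_1}(\omega^1,I^*)=A_N(\omega,I^*)$ already used inside the proof of Proposition~\ref{prop:subsequence}). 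With the return time bounded by $2$, the argument is short; without that explicit bound one would need the uniform distribution of the $T$-orbit to control excursion lengths on average, which is slightly more delicate but still standard.
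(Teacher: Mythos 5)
Your forward direction is exactly the paper's argument: the paper proves Theorem~\ref{thm:n=3} by the same three ingredients you list --- the first-return description of $f$ from \cite{Yoc06}, the scaling invariance of Proposition~\ref{prop:scaling}, Theorem~\ref{thm:kronecker}, and the restriction result Proposition~\ref{prop:subsequence} --- and in fact that is \emph{all} the paper offers ("From Proposition~\ref{prop:subsequence} and Theorem~\ref{thm:kronecker} we thus deduce"). Where you genuinely go beyond the paper is the converse, which the cited propositions do not cover since Proposition~\ref{prop:subsequence} is one-directional; your idea of exploiting the return time being at most $2$ (so $N\leq 2N_1+1$) to transfer a discrepancy lower bound from the $T$-orbit to the $f$-orbit is the right one, and it is the piece a fully rigorous proof needs. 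Two points in that sketch still require care. First, your "transport" sentence is stated in the wrong direction: what you need is that every anchored test interval $[0,b)\subset\hat I$ witnessing large deviation of the $T$-orbit yields comparable deviation of the $f$-orbit, i.e.\ $D_N^*(T\text{-orbit})\leq C\,D_{N_1}(f\text{-orbit})+O(1/N)$. Second, the identity $A_{N_1}(\omega^1,I^*)=A_N(\omega,I^*)$ only handles $b\leq\lambda^*$; for $b>\lambda^*$ the points of the $T$-orbit in $[\lambda^*,b)$ are not counted by any subinterval of $I$ directly, but they are the $T$-images of the $f$-orbit points in $[\lambda_A,\lambda_A+\lambda_B)$, so their count is again controlled (up to $O(1)$) by the extended discrepancy of the $f$-orbit; similarly $|N_1/N-\lambda^*/(\lambda^*+\lambda_B)|$ is controlled by the $f$-count of $I_B$ rather than assumed. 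With these two repairs your argument closes the "only if" direction that the paper leaves implicit.
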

   In particular, if we choose $\lambda_A,\lambda_B,\lambda_C$ such that $\lambda^* = 1$, then $(f^n(y))_{n \geq 0}$ is a (classical) low-discrepancy sequence in $[0,1)$ which comes from the restriction of a low-discrepancy sequence in $[0,1+\lambda_B)$. By \cite{Nie92}, Corollary~3.5, the rate of convergence of star-discrepancy is expected to be the better the smaller the coefficients in the continued fraction expansion of  $\tfrac{\lambda_B+\lambda_C}{1 + \lambda_B}$ are. Hence, for a fair comparison with the case $n=2$, i.e. the circle rotation by $\gamma$, we should impose 
   $$\frac{\lambda_B+\lambda_C}{1 + \lambda_B} = \gamma.$$
   The additional condition lowers the degree of freedom to $1$. We may e.g. choose $\lambda_C$ with $\lambda_C < \min(\tfrac{\gamma}{1-\gamma},\tfrac{1-\gamma}{\gamma})$ arbitrarily and by that automatically fix $\lambda_A$ and $\lambda_B$. In Figure~2 we plot the star-discrepancy of the orbit arising from the circle rotation by the golden mean $\gamma = \tfrac{\sqrt{5}-1}{2}$ in comparison to the case $n=3$ with $\lambda_C = \gamma / 2$ and $\lambda_C = \gamma / 4$ respectively. Taking into account the graph of $\log(N)/N$ we clearly see that all sequences are indeed low-discrepancy. Although the star-discrepancies have similar asymptotic behavior, there seems to be more variation for the case $n=3$. Yet, the degree of freedom for the choice of $\lambda_C$ allows for optimization of star-discrepancy.
   \begin{center}
   	 \includegraphics[scale = 0.5]{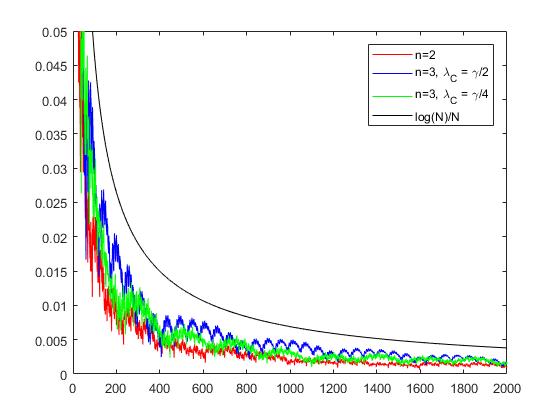}\\
   	 Figure 2. Comparison of star-discrepancies up to $N = 2.000$.
   \end{center}
   
   \paragraph{The general case.} Under some rather mild conditions, interval exchange transformations yield low-discrepancy sequences for $n=2,3$. Still the requirements imposed in Theorem~\ref{thm:n=3} are strictly stronger than the so-called Keane condition (compare \cite{Via06}, Chapter 3). Nevertheless, there is reasonable hope that a similar result can be proven also for $n \geq 4$ due to Birkhoff's ergodic theorem and the following theorem proven independently by Masur \cite{Mas82} and Veech \cite{Vee78}. Let the combinatorial data be fixed and \textit{almost every} refer to the choice of length data with respect to the Lebesgue measure. 
   \begin{thm} \textbf{(Masur, Veech)} Almost every interval exchange transformation is uniquely ergodic.
   \end{thm}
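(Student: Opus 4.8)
The plan is to prove unique ergodicity for almost every interval exchange transformation via the Rauzy--Veech renormalization scheme, following the approach of Veech. Fix the admissible combinatorial data $\pi = (\pi_0,\pi_1)$ and let the length data range over the positive cone $\RR_+^n$, or equivalently, after normalizing $\lambda^* = 1$, over the standard simplex $\Delta$. First I would discard a null set: an interval exchange transformation admits a \emph{connection} (a coincidence between the forward orbits of two of its discontinuities) only on a set of length data of Lebesgue measure zero, so almost every such map satisfies Keane's condition. By Keane's theorem it is then minimal; in particular every invariant probability measure is non-atomic and the renormalization below can be iterated indefinitely. It therefore remains to upgrade minimality to \emph{unique} ergodicity for almost every choice of lengths.

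The heart of the argument recasts unique ergodicity as cone contraction. Rauzy--Veech induction replaces $f$ by its first-return map to the interval obtained by deleting the shorter of the two rightmost subintervals; this produces a new interval exchange transformation with the same number of intervals, combinatorial data lying in the same finite \emph{Rauzy class}, and length data $\lambda^{(1)} = (A_1)^{-1}\lambda$ for an elementary nonnegative integer matrix $A_1 \in \Mat(n,\ZZ)$ recording which interval ``wins''. Iterating yields matrices whose products are $A^{(m)} := A_1 A_2 \cdots A_m$, so that $\lambda = A^{(m)} \lambda^{(m)}$ with $\lambda^{(m)} \in \RR_+^n$ at every stage. The cone of $f$-invariant measures is exactly the set of length vectors compatible with all stages of the induction, i.e. the nested intersection $\bigcap_{m \geq 0} A^{(m)} \RR_+^n$. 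Hence unique ergodicity is equivalent to this intersection being a single ray.

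The mechanism forcing the contraction is Birkhoff's theorem on positive matrices: whenever the accumulated matrix $A^{(m)}$ has all entries strictly positive, it contracts the Hilbert projective metric on $\RR_+^n$ by a factor strictly less than $1$, controlled by the entries of $A^{(m)}$. It thus suffices to show that for almost every choice of lengths the products $A^{(m)}$ are strictly positive for infinitely many $m$, with contraction factors bounded away from $1$. To obtain this I would pass to the renormalized induction map $\mathcal{R}$ on $\Delta$ (rescaling the total length back to $1$ after each step) and establish that $\mathcal{R}$ preserves an absolutely continuous, $\sigma$-finite, conservative, ergodic measure; here Masur's zippered-rectangle construction realizes $\mathcal{R}$ as the return map of a finite-measure flow, which supplies the conservativity. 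A combinatorial lemma on Rauzy classes --- that within each class one can reach, in finitely many induction steps, a configuration whose transition block is strictly positive --- shows the ``good'' set where $A^{(m)}$ becomes positive has positive measure. Conservativity and ergodicity of $\mathcal{R}$ then force almost every orbit to return to this good set infinitely often, producing infinitely many strictly positive, uniformly contracting products.

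The main obstacle is precisely this measure-theoretic input: constructing the ergodic invariant measure for $\mathcal{R}$, controlling the distortion of the renormalization, and verifying a uniform positivity estimate so that the matrix products do not degenerate (the contraction factors must not tend to $1$ too fast along a positive-measure set). Once infinitely many uniformly contracting positive products are secured, the Hilbert diameters of the cones $A^{(m)}\RR_+^n$ tend to zero, so $\bigcap_{m} A^{(m)}\RR_+^n$ reduces to a single ray. That ray is the unique admissible length vector up to scaling, hence the unique invariant probability measure, which proves that almost every interval exchange transformation is uniquely ergodic.
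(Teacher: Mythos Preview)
The paper does not prove this theorem at all: it is stated as a known result with references to \cite{Mas82} and \cite{Vee78}, and the text immediately moves on. There is therefore no ``paper's own proof'' to compare your proposal against.

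Your sketch is a reasonable outline of the Veech approach via Rauzy--Veech renormalization and cone contraction in the Hilbert projective metric, and it correctly identifies the genuine difficulty (the construction and ergodicity of the invariant measure for the renormalization dynamics, together with the distortion and positivity estimates). One technical point worth flagging: the natural invariant measure for Rauzy--Veech induction on the simplex is only $\sigma$-finite (not finite), and the standard way to obtain the recurrence you need is either to invoke conservativity directly or to pass to Zorich's acceleration, which has a finite absolutely continuous invariant measure; you gesture at this but the two routes should not be conflated. In any case, for the purposes of this paper no proof is expected---the statement is being quoted, not established.
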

   Generalizing Theorem 1.6 in \cite{Wei17} we may however not expect to get $LS$-sequences with $S \geq 2$ by this construction. 
   \begin{thm}
   		For arbitrary $L$ and $S \geq 2$ the corresponding $LS$-sequence of partitions does not coincide with an orbit of any interval exchange transformation.   
   \end{thm}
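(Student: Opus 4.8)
The plan is to argue by contradiction: suppose an $LS$-sequence of partitions with $S\geq 2$ did arise from an interval exchange transformation $T$ on $[0,1)$ with $n$ intervals, and derive a structural constraint that is incompatible with $S\geq 2$. The key feature to exploit is that an interval exchange transformation on finitely many intervals has a translation vector $w$ with only finitely many distinct entries, so the gaps between consecutive points of an orbit — or equivalently the lengths of the intervals produced at each refinement step — can take only finitely many values that are moreover obtained by adding or subtracting these finitely many translation amounts. First I would recall, as in \cite{Wei17}, that the $LS$-sequence of points is by construction intimately tied to the $LS$-sequence of partitions $\{\rho_{L,S}^n\pi\}$: the partition after $n$ steps consists only of intervals of length $\beta^n$ and $\beta^{n+1}$, where $\beta$ solves $L\beta+S\beta^2=1$. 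So at every stage exactly two interval lengths occur, but crucially the \emph{ratio} of the two lengths is the fixed irrational number $\beta$, and this same pair of scales $(\beta^n,\beta^{n+1})$ keeps getting subdivided in a self-similar way.

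The heart of the argument should be a renewal/return-time analysis. I would show that if the orbit of $T$ reproduces the $LS$-sequence of points, then reading off the induced map of $T$ on a suitable subinterval (the first $L$ ``big'' intervals of $\rho_{L,S}\pi$, say the interval $[0,L\beta)$) must again be an interval exchange transformation, by Rauzy–Veech induction / first-return considerations, and by self-similarity it must be \emph{affinely conjugate} to $T$ itself — this is exactly the mechanism behind Theorem 1.6 of \cite{Wei17} for $S=1$, where the induced map is a genuine rotation (two intervals). The scaling factor of this self-similarity is $\beta$. Now count intervals: passing from $\rho_{L,S}^n\pi$ to $\rho_{L,S}^{n+1}\pi$, only the $l_n$ intervals of length $\beta^n$ get subdivided, each into $L+S$ pieces ($L$ of length $\beta^{n+1}$, $S$ of length $\beta^{n+2}$), and the recursions $l_{n+1}=Ll_n+s_n$, $s_{n+1}=Sl_n$ hold. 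For this to be realized by a fixed finite interval exchange under renormalization, the combinatorial permutation of the induced map must stabilize, and the number of continuity intervals of the induced map is bounded by $n$ (the fixed number of intervals of $T$) independently of the step; but the self-similar subdivision rule with $S\geq 2$ forces, at each renormalization step, the appearance of a \emph{third} relevant scale relation — the induced interval exchange would need discontinuity points at positions involving both $\beta$ and $\beta^2$ as independent increments over $\ZZ[\beta]$, which cannot be matched by translations by a fixed finite set of values whose consistency equation is $L\beta+S\beta^2=1$ with $S\geq 2$. Concretely, I would pin down that the translation vector entries of $T$ would have to lie in $\beta\ZZ + \ZZ$, while the $S\geq 2$ refinement forces genuinely $\beta^2$-adic gaps that are not in this module unless $S\leq 1$; this is the contradiction.

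The main obstacle I anticipate is making the ``induced map is self-similar'' step rigorous without circularity: one must show that the restriction construction of Proposition~\ref{prop:subsequence} applied to the $LS$-point sequence on its natural subinterval yields \emph{again} an $LS$-type sequence with the \emph{same} $\beta$ (hence the renormalization is a genuine self-map of the parameter), and simultaneously that any interval exchange hypothesized to produce it would have to have its Rauzy class stabilize, so that after finitely many induction steps the permutation repeats and we get an honest linear self-similarity with multiplier a power of $\beta$. Handling the ordering subtlety in Definition~\ref{def:ordering_LS} — the $\psi^{(n)}_{i,j}$ insert points in a prescribed order that need not be monotone — is where the $S\geq 2$ case genuinely differs from $S\leq 1$, because for $S\geq 2$ the inserted points at level $n+1$ occupy \emph{two} distinct offset families ($j=0,\dots,S-1$) inside each subdivided interval, and I would need to check that no relabelling of a finite interval exchange can absorb both families into a single translation structure. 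Once that combinatorial rigidity is established, the length/module computation closes the argument. A secondary technical point is to rule out $T$ acting on \emph{infinitely} many intervals — but the statement as phrased concerns interval exchange transformations, which in this paper are defined (Section~\ref{cha:iet}) on a \emph{finite} partition, so that case is outside the claim; I would add a remark noting that the $S=1$, $L=1$ infinite-interval phenomenon of \cite{CIV14} is precisely what is excluded by finiteness.
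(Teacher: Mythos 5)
Your plan is a genuinely different route from the paper's, but as it stands it has real gaps rather than being a complete alternative proof. The two load-bearing steps are exactly the ones you leave open. First, the renormalization step: you assert that the first-return map of the hypothetical interval exchange to $[0,L\beta)$ must be affinely conjugate to the original map with multiplier $\beta$, and you yourself flag this as the ``main obstacle''. Nothing in the hypothesis forces such self-similarity: the assumption is only that one orbit of some finite interval exchange coincides with the $LS$-point set, and extracting from this a stabilizing Rauzy class and an exact affine conjugacy is a substantial rigidity statement that your sketch does not supply. Second, the closing module computation is not justified: you claim the translation vector entries of $T$ ``would have to lie in $\beta\ZZ+\ZZ$'', but the lengths of a hypothetical interval exchange are a priori arbitrary positive reals, and no step of your argument pins them to $\ZZ+\ZZ\beta$. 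Moreover, comparing a single $\beta^2$-gap against $\ZZ+\ZZ\beta$ would not even suffice (and for some parameters, e.g. $L=1$, $S=2$, $\beta=1/2$ is rational, so ``$\beta^2\notin\ZZ+\ZZ\beta$'' is not the right dichotomy); what one really needs is a statement about \emph{all} powers $\beta^l$ simultaneously.

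For comparison, the paper's proof needs none of the induction/renormalization machinery. From \eqref{eq5}, every point of an orbit of an interval exchange transformation is $x$ plus an integral linear combination of the finitely many lengths $\lambda_\alpha$, hence the whole orbit lies in a fixed finitely generated $\ZZ$-module. On the other hand, the $LS$-partition points contain $\beta^l$ for every $l$, and the defining relation $L\beta+S\beta^2=1$ gives $\beta^{n+2}=(\beta^n-L\beta^{n+1})/S$, so for $S\geq 2$ the powers $\beta^l$ acquire unbounded denominators (over any fixed finite set of generators in $\QQ(\beta)$) and cannot all lie in one finitely generated $\ZZ$-module; this is the contradiction, and it is exactly the quantitative version of the ``$\beta^2$-adic gaps'' intuition you gesture at. If you want to rescue your approach, the honest way is to drop the self-similarity step entirely and make the module argument global over all refinement levels, at which point you have essentially reconstructed the paper's proof.
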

	\begin{proof}
		Let $L$ and $S \geq 2$ be fixed. We have
		\begin{align} \label{eq6} 
		\beta^{n+2} = \frac{\beta^{n+1}-L\beta^n}{S}
		\end{align}
		and the denominator grows arbitrarily large with increasing $n$. Suppose that the claim is not true and let $(\lambda_\alpha)_{\alpha \in \mathcal{A}}$ denote the length data of the corresponding interval exchange transformation and $x$ be an arbitrary point in $[0,1)$.  We see from \eqref{eq5} that every point in the orbit of $x$ may be written as a linear combination of $x$ and the $(\lambda_\alpha)_{\alpha \in \mathcal{A}}$ with integral coefficients. This contradicts \eqref{eq6} because by construction the $LS$-sequence contains $\beta^l$ for all $l \in \NN$.  
	\end{proof}
Still the algebraic relation $L \beta + S \beta^2 = 1$, which is a main ingredient for the definition of $LS$-sequences, can be used to get interval exchange transformations potentially yielding low-discrepancy sequences. Recall the fact that every interval exchange transformation can be normalized by choosing $\mathcal{A} = \left\{ 1, \ldots, d \right\}$ and $\pi_0 = \Id$ (see \cite{Via06}). If we set $\lambda_i = \beta$ for $i=1,\ldots, L$ and $\lambda_i = \beta^2$ for $i = L+1, \ldots, L+S$ and let $\pi_1$ be arbitrary then the intervals before applying $f$ corresponds to the intervals of $\rho_{L,S}$. The orbit of every left endpoint of $\rho_{L,S}$ is contained in the set
\begin{align*}
\mathcal{J}_{L,S} :&= \left\{ \left\{m \beta + n \beta^2\right\} \ | \ m,n \in \ZZ \right\} \\
&= \left\{ \left\{m \beta + n \beta^2\right\} \ | \ m,n \in \ZZ, 0 \leq n < S \right\} \\
&= \bigcup_{n=1}^{S-1} \left\{ \left\{m \beta + n \beta^2\right\} \ | \ m \in \ZZ \right\}.
\end{align*}
By Dirichlet's approximation theorem, the set $\mathcal{J}_{L,S}$ is dense in $[0,1]$. It can even be equipped with an ordering that turns it into a low-discrepancy sequence. 
\begin{thm} The set $\mathcal{J}_{L,S}$ can be ordered such that the corresponding sequence is low-discrepancy.
\end{thm}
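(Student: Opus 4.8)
\emph{Proof proposal.} The plan is to realise $\mathcal{J}_{L,S}$ as a finite \emph{disjoint} union of translated two-sided Kronecker orbits and to interleave suitable enumerations of them. Recall that $\beta$ is irrational -- this is precisely what makes $\mathcal{J}_{L,S}$ dense -- and since $\beta$ is a root of $Sx^{2}+Lx-1$, it is in fact a quadratic irrational. By Lagrange's theorem its continued fraction expansion is eventually periodic, so its partial quotients are bounded and hence so is their moving average; by Theorem~\ref{thm:kronecker} the Kronecker sequence $(\{k\beta\})_{k\geq 0}$ is therefore a low-discrepancy sequence in $[0,1)$.

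Next I would record two elementary stability properties. First, the extreme discrepancy $D_{N}$ is invariant under the maps $x\mapsto\{x+t\}$ and $x\mapsto\{-x\}$ of $\RR/\ZZ$, since these permute arcs isometrically; combined with $D_{N}^{*}\leq D_{N}\leq 2D_{N}^{*}$ this shows that translating or reflecting a low-discrepancy sequence in $[0,1)$ again yields one. Second, if $T_{1},\ldots,T_{m}$ are low-discrepancy sequences in $[0,1)$, then the sequence $T$ obtained by interleaving them round-robin (first the initial terms of $T_{1},\ldots,T_{m}$, then their second terms, and so on) is again low-discrepancy: writing $N=qm+\rho$ with $0\leq\rho<m$, for every $B=[0,b)$ one has $A_{N}(B,T)=\sum_{i=1}^{m}A_{q_{i}}(B,T_{i})$ with each $q_{i}\in\{q,q+1\}$ and $\sum_{i}q_{i}=N$, whence
$$\left|\frac{A_{N}(B,T)}{N}-b\right|\leq\frac{1}{N}\sum_{i=1}^{m}q_{i}D_{q_{i}}^{*}(T_{i})=O\!\left(\frac{\log N}{N}\right)$$
uniformly in $B$. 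This estimate is the only computation in the proof that needs a little care: it is where the $\log$-factor is kept under control, and everything else is bookkeeping.

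Finally I would decompose the set. Using $S\beta^{2}=1-L\beta$ one sees that for every $n\in\ZZ$, writing $r:=n\bmod S\in\{0,\ldots,S-1\}$, one has $n\beta^{2}\equiv r\beta^{2}-\lfloor n/S\rfloor L\beta\pmod 1$; hence $\mathcal{J}_{L,S}=\bigcup_{r=0}^{S-1}\mathcal{J}^{(r)}$ with $\mathcal{J}^{(r)}:=\{\,\{k\beta+r\beta^{2}\}:k\in\ZZ\,\}$. Because $\beta$ is irrational, the map $k\mapsto\{k\beta+r\beta^{2}\}$ is injective on $\ZZ$ for each $r$ and the $S$ sets $\mathcal{J}^{(r)}$ are pairwise disjoint. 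For fixed $r$, enumerate $\mathcal{J}^{(r)}$ bijectively by interleaving $(\{(k+1)\beta+r\beta^{2}\})_{k\geq 0}$ (the values with $k\geq 1$) and $(\{-k\beta+r\beta^{2}\})_{k\geq 0}$ (the values with $k\leq 0$); these are obtained from $(\{k\beta\})_{k\geq 0}$ by a translation, respectively by a translation and the reflection $x\mapsto\{-x\}$, hence are low-discrepancy, and so is their interleaving. Interleaving the resulting $S$ sequences over $r=0,\ldots,S-1$ then gives a bijection $\NN_{0}\to\mathcal{J}_{L,S}$ which, by the interleaving property, is a low-discrepancy sequence.

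I do not expect a serious obstacle: the substantive input is that $\beta$ is a quadratic irrational, so that its Kronecker orbit has low discrepancy, while the rest consists of the standard facts that finite interleavings, translations and reflections preserve low discrepancy. The one point that genuinely has to be verified is that the enumeration constructed above is an honest bijection of $\mathcal{J}_{L,S}$, which is exactly why the injectivity and disjointness statements -- both resting on the irrationality of $\beta$ -- are singled out.
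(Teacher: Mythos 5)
Your proposal is correct and follows essentially the same route as the paper: decompose $\mathcal{J}_{L,S}$ via $S\beta^2=1-L\beta$ into the $S$ Kronecker-type orbits $\{\{k\beta+r\beta^2\}\}$, use Lagrange's theorem (so $\beta$ is a quadratic irrational with bounded partial quotients) together with Theorem~\ref{thm:kronecker} to get low-discrepancy of each, and interleave, controlling the merged sequence by what the paper cites as the triangle inequality for discrepancies. Your version is in fact slightly more careful than the paper's, since you enumerate the two-sided orbits (negative $m$ as well, via reflection) and verify that the enumeration is a genuine bijection of $\mathcal{J}_{L,S}$.
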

\begin{proof} It follows from Lagrange's Theorem on continued fractions that $\beta$ has eventually periodic continued fraction expansion (see e.g. \cite{Ste92}). Therefore, for each $n \in \ZZ$ and $N \in \NN$, the set 
	$$ \mathcal{J}_{L,S}^n := \left\{ \left\{m \beta + n \beta^2\right\} \ | \ m \in \ZZ \right\}$$
	has discrepancy $D_N(\mathcal{J}_{L,S}^n) \leq c \log{N} /N$ for some $c \in \RR$ by Theorem~\ref{thm:kronecker}. Now, we define a sequence $(x_i)$ of the left endpoints of $\mathcal{J}_{L,S}$ by
	$$ x_i := \left\{ \lfloor i / S \rfloor \beta + (i - kS) \beta^2  \right\} \quad \textrm{for} \ kS \leq i < (k+1)S, \ k \in \NN_0 $$
	The triangle inequality for discrepancies (see \cite{KN74}, Theorem 2.6) implies that $(x_i)$ is indeed a low-discrepancy sequence.
\end{proof}
Next, we show how to construct interval exchange transformations which have an orbit that coincides with $\mathcal{J}_{L,S}$ as a set. The example for $n=4$ is illuminating.
   \begin{center}
	\begin{tikzpicture}[scale=1.0]
	\draw[thick] (0,0)--(8,0);     	
	\draw[thick] (0,0)--(0,0.25);     	
	\draw (1.464,0)--(1.464,0) node[above] {$1$};
	\draw[thick] (2.928,0)--(2.928,0.25);
	\draw (4.392,0)--(4.396,0) node[above] {$2$};
	\draw[thick] (5.864,0)--(5.864,0.25);  	
	\draw (6.396,0)--(6.396,0) node[above] {$3$};  
	\draw[thick] (6.928,0)--(6.928,0.25);   	
	\draw (7.464,0)--(7.464,0) node[above] {$4$};  
	\draw[thick] (8,0)--(8,0.25);
	
	\draw[thick] (0,-0.25)--(8,-0.25);     	
	\draw[thick] (0,-0.25)--(0,-0.5);     	
	\draw (1.464,-0.25)--(1.464,-0.25) node[below] {$2$};
	\draw[thick] (2.928,-0.25)--(2.928,-0.5);
	\draw (3.465,-0.25)--(3.465,-0.25) node[below] {$4$};
	\draw[thick] (4,-0.25)--(4,-0.5);  	
	\draw (5.464,-0.25)--(5.464,-0.25) node[below] {$1$};  
	\draw[thick] (6.928,-0.25)--(6.928,-0.5);   	
	\draw (7.464,-0.25)--(7.464,-0.25) node[below] {$3$};  
	\draw[thick] (8,-0.25)--(8,-0.5);

	\end{tikzpicture}\\    	
	Figure 3. Interval exchange transformation for $n=4$ \label{fig:exa}
\end{center}
\begin{exa} \label{exa:general_iet} Let $n=4$, $L=S=2$ and $2 \beta + 2\beta^2 = 1$. We choose $\pi_1$ by setting $\pi_1(1)=2, \pi_1(2) = 4, \pi_1(3)= 1$ and $\pi_1(4) = 3$ (see Figure~3). The corresponding interval exchange transformation $f_{2,2}$ is admissible with translation vector
	\begin{align*}
	 & w_1 = \beta + \beta^2, \quad w_2 = -\beta, \quad w_3 = \beta^2 \quad  w_4 = - \beta - \beta^2.
	\end{align*}
	Note that $f_{2,2}(x) \neq x$ for all $x \in [0,1)$. For $x \in I_1$ we have $f_{2,2}(x) \in I_2$ or $f_{2,2}(x) \in I_3$. Furthermore, the following implications hold
    \begin{align*}
	    & x \in I_2 \quad \Rightarrow \quad f_{2,2}(x) \in I_1\\
	    & x \in I_3 \quad \Rightarrow \quad f_{2,2}(x) \in I_4\\
	    & x \in I_4 \quad \Rightarrow \quad f_{2,2}(x) \in I_2.
    \end{align*}
    This means $f_{2,2}$ always maps an element of $I_2 \cup I_3$ to $I_1 \cup I_4$ and vice versa. Hence, the translation vector of $f_{2,2}^2$ is restricted to the following possibilities, where $\equiv$ stands for equivalence modulo $1$:
    \begin{align*}
	    & w_1 + w_2 = \beta + \beta^2 -  \beta = \beta^2\\
	    & w_1 + w_3 = \beta + \beta^2 + \beta^2 \equiv -\beta\\
	    & w_3 + w_4 =  \beta^2 -\beta - \beta^2 = - \beta\\	    
	    & w_4 + w_2 = - \beta - \beta^2 - \beta \equiv \beta^2 
    \end{align*}
    We define the sequence $(x_n)_{n\in\ZZ}$ by $x_0 = \beta$ and $x_k = f_{2,2}^k(x_0)$ for all $k \in \ZZ$ and claim that every pair $(x_{2k},x_{2k+1})$ is either of the form
    $$(\left\{(-k+1)\beta\right\},\left\{(-k+2)\beta+\beta^2\right\}) \quad \textrm{or} \quad (\left\{(-k+2)\beta+\beta^2\right\},\left\{(-k+1)\beta\right\}).$$
    We prove this by induction on $k$. For $k=0$ the first possibility holds, i.e. $(x_{0},x_1) = (\beta,2\beta + \beta^2)$. Now assume that the claim is true for $k$. At first, we consider the case $(x_{2k},x_{2k+1}) = (\left\{(-k+1)\beta\right\},\left\{(-k+2)\beta+\beta^2\right\})$. In view of the translation vector of $f_{2,2}^2$, the entry $x_{2k+2}$ is either equal to 
    \begin{align*}
    & \left\{ (-k+1)\beta - \beta \right\}  = \left\{ (-(k+1)+1)\beta \right\} =: y_{2k+2} \quad \textrm{or}\\
    & \left\{ (-k+1)\beta + \beta^2 \right\} = \left\{ (-(k+1)+2)\beta + \beta^2 \right\} =: z_{k+2}.
    \end{align*}
    Since $x_{2k+3} = f_{2,2}(x_{2k+2}) \neq x_{2k+2}$, the entry $x_{2k+3}$ is $z_{2k+2}$ if $x_{2k+2} = y_{2k+2}$ and vice versa. The argumentation in the second case, namely $(x_{2k},x_{2k+1}) = (\left\{(-k+2)\beta+\beta^2\right\},\left\{(-k+1)\beta\right\})$ is similar. In summary, it follows that $\left\{ f_{2,2}^n(\beta) \ | \ n \in \ZZ \right\}$ equals $\mathcal{J}_{L,S}$ as a set.
 
\end{exa}
Although the combinatorial data in Example~\ref{exa:general_iet} is admissible, there is as few interaction between the $\beta$-part and the $\beta^2$-part of the partition as possible. For arbitrary $n$ we specify $\pi_1$ by
\begin{align*}
	& \pi_1(i) = i + 1, \quad i = 1, \ldots, L-1,\\
	& \pi_1(L) = L+S,\\
	& \pi_1(L+1) = 1,\\
	& \pi_1(i) = i + 1, \quad i = L+2, \ldots L + S.
\end{align*}
Thus, the translation vector of the corresponding interval exchange transformation is
\begin{align*}
& w_1 = (L-1)\beta + \beta^2\\
& w_i = - \beta, \quad i = 2, \ldots, L,\\
& w_i = \beta^2 , \quad i = L+1, \ldots L + S-1,\\
& w_{L+S} = -(S-1)\beta^2-\beta.
\end{align*}
Now we fix an arbitrary $r \in \ZZ$ and choose $q_0 \in \ZZ$ such that $\left\{ -r\beta - q_0 \beta^2 \right\} \in I_1$ but $\left\{ -r\beta - (q_0+1) \beta^2 \right\} \notin I_1$ and set $x_0 = \left\{ -r\beta - q_0 \beta^2 \right\}.$ 
\begin{thm} \label{thm:orbit}
	If $L \geq S$, then the orbit  $\left\{ f_{L,S}^n(x_0) \ | \ n \in \ZZ \right\}$ equals $\mathcal{J}_{L,S}$ as a set.
\end{thm}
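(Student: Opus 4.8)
The plan is to prove the two inclusions $\{f_{L,S}^n(x_0):n\in\ZZ\}\subseteq\mathcal{J}_{L,S}$ and $\mathcal{J}_{L,S}\subseteq\{f_{L,S}^n(x_0):n\in\ZZ\}$ separately. The first is immediate: each of the translation vectors $w_1,\dots,w_{L+S}$ above lies in $\ZZ\beta+\ZZ\beta^2$, hence so does every partial sum of translation vectors along an orbit, and since $x_0=\{-r\beta-q_0\beta^2\}$ itself lies in $\mathcal{J}_{L,S}=\{\{m\beta+n\beta^2\}:m,n\in\ZZ\}$ the whole orbit is contained in $\mathcal{J}_{L,S}$. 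For the reverse inclusion I would first use $S\beta^2\equiv-L\beta\pmod 1$ to replace the target by $\mathcal{J}_{L,S}=\bigcup_{k=0}^{S-1}\{\{m\beta+k\beta^2\}:m\in\ZZ\}$, so that it suffices to produce every point $\{m\beta+k\beta^2\}$ with $0\le k\le S-1$ in the orbit. Note that the hypothesis $L\ge S$ forces $\beta=\tfrac{-L+\sqrt{L^2+4S}}{2S}$ to be irrational: if it were rational then $S$ would have the form $e(e+L)$ with $e\ge1$, hence $S\ge L+1$, a contradiction; this irrationality is what makes $\mathcal{J}_{L,S}$ dense and infinite and is used tacitly in the arithmetic below.

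The structural core of the argument is the claim that the first return map $g$ of $f_{L,S}$ to $I_1=[0,\beta)$ is the rotation of $\RR/\beta\ZZ$ by $\beta^2$. I would verify this by tracking a point $x\in I_1$ and splitting $I_1=[0,\beta-\beta^2)\cup[\beta-\beta^2,\beta)$. If $x\in[0,\beta-\beta^2)$ then $f_{L,S}(x)=x+w_1$ lands in the right-hand part $I_L'=[(L-1)\beta+\beta^2,L\beta)$ of $I_L$, after which the next $L-1$ iterates are plain $-\beta$ translations descending through $I_{L-1},\dots,I_1$, so the orbit first returns to $I_1$ after $L$ steps at $g(x)=x+\beta^2$. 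If $x\in[\beta-\beta^2,\beta)$ then $f_{L,S}(x)$ lands in $I_{L+1}$, the next $S-1$ iterates add $\beta^2$ and climb through the $\beta^2$-intervals up to $I_{L+S}$, one application of $w_{L+S}$ drops into the left-hand block $I_L''=[(L-1)\beta,(L-1)\beta+\beta^2)$ of $I_L$, and $L-1$ further $-\beta$ steps return to $I_1$ after $L+S$ steps at $g(x)=x-\beta+\beta^2\equiv x+\beta^2\pmod\beta$. (For $L=S=2$ this is exactly the case distinction made in Example~\ref{exa:general_iet}.) Since $x_0\in I_1$ and $\beta^2/\beta=\beta$ is irrational, a short computation with $1=L\beta+S\beta^2$ then shows that the $g$-orbit of $x_0$ equals $\mathcal{J}_{L,S}\cap[0,\beta)$: for $z=\{m\beta+k\beta^2\}\in[0,\beta)$ one writes $z-x_0=a\beta+b\beta^2$ with $a,b\in\ZZ$ and checks $z=g^{b}(x_0)$.

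It remains to account for the points of $\mathcal{J}_{L,S}$ in $[\beta,1)$. Writing each $I_j$ with $2\le j\le L$ as the disjoint union of its left $\beta^2$-block $I_j''=[(j-1)\beta,(j-1)\beta+\beta^2)$ and the complementary $I_j'$, and reading off the two cases above, the finite excursion $E(y)$ of $f_{L,S}$ between consecutive visits to $I_1$ (including its starting point $y$) is $E(y)=\{y\}\cup\{y+j\beta+\beta^2:1\le j\le L-1\}$ for $y\in[0,\beta-\beta^2)$ and $E(y)=\{y\}\cup\{y+(L-1)\beta+k\beta^2:1\le k\le S\}\cup\{y+j\beta+\beta^2:0\le j\le L-2\}$ for $y\in[\beta-\beta^2,\beta)$. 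Since the orbit of $x_0$ is $\bigcup_{q\in\ZZ}E(g^q(x_0))=\bigcup_{y\in\mathcal{J}_{L,S}\cap[0,\beta)}E(y)$, it suffices to exhibit, for each $z\in\mathcal{J}_{L,S}\cap[\beta,1)$, a $y\in\mathcal{J}_{L,S}\cap[0,\beta)$ with $z\in E(y)$: for $z\in I_j'$ take $y=z-(j-1)\beta-\beta^2$, for $z\in I_j''$ take $y=z-(j-2)\beta-\beta^2$, and for $z\in I_{L+k}$ take $y=z-(L-1)\beta-k\beta^2$; in each case $y$ again lies in $\mathcal{J}_{L,S}$, in the correct half of $I_1$, and with the index needed for $z\in E(y)$. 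Together with the first inclusion this yields $\{f_{L,S}^n(x_0):n\in\ZZ\}=\mathcal{J}_{L,S}$.

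I expect the main obstacle to be the verification in the second paragraph that the forward orbit of a point of $I_1$ really follows exactly one of the two excursion patterns — equivalently, that the descending run through the $\beta$-intervals and the ascending run through the $\beta^2$-intervals never interleave. This reduces to a handful of elementary inequalities among the endpoints of the $I_\alpha$ and the translation amounts $w_1,-\beta,\beta^2,w_{L+S}$, together with the precise placements $f_{L,S}(I_1)=I_L'\cup I_{L+1}$ and $f_{L,S}(I_{L+S})=I_L''$; it is not deep but has to be set up carefully. Once the return map is identified as a rotation and the excursions are written down, the remaining step — checking that the index ranges $1\le j\le L-1$ and $1\le k\le S$ make the case distinction for $z\in[\beta,1)$ exhaustive and consistent — is routine bookkeeping.
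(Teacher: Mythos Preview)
Your argument is correct, and although both it and the paper's proof rest on the same interval-by-interval tracking of the orbit, you organise it along a different decomposition. The paper follows the orbit of the specially chosen $x_0\in[0,\beta^2)\subset I_1$ through one explicit block of $L^{2}+L+S$ iterates --- $L$ short loops through $I_L,\dots,I_2$ followed by one long loop through $I_{L+1},\dots,I_{L+S}$ and back down --- and observes that at the end one is ``back in the same situation'' with the $r$-coordinate shifted by~$1$; surjectivity onto $\mathcal{J}_{L,S}$ is then asserted without further detail. You instead identify the first-return map to $I_1$ as the rotation of $\RR/\beta\ZZ$ by $\beta^2$, deduce at once from the $\ZZ$-linear independence of $\beta,\beta^2$ that the returns already hit all of $\mathcal{J}_{L,S}\cap I_1$, and then cover $[\beta,1)$ by inverting the two explicit excursion types. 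The verification you flag as the main obstacle is exactly the step-by-step case-checking the paper performs inside its long block, so nothing new is required there; what your packaging buys is that the role of irrationality and the passage from local (one return) to global (full orbit) are made explicit, whereas the paper's single-block account is terser but leaves the final ``it is clear that every $y\in\mathcal{J}_{L,S}$ is contained in the orbit'' essentially to the reader.
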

\begin{proof}
	In order to facilitate notation we leave away brackets $\left\{ \cdot \right\}$, i.e. all expressions have to be interpreted modulo $1$. The function $f_{L,S}$ maps 
	$$x_0 \mapsto (-r+(L-1))\beta - (q_0-1) \beta^2$$
	with $f_{L,S}(x_0) \in I_L$. This implies
	$$f_{L,S}^i(x_0) = (-r+(L-1)-(i-1)) - (q_0-1) \beta^2, \quad i = 1, \ldots, L.$$
	Note that $m \beta^2 > \beta$ is equivalent to $m \geq L+1$ by the definition of $L$ and $S$. Therefore we again have $f_{L,S}(f_{L,S}^L(x_0)) \in I_L$ and the orbit continues as
	$$f_{L,S}^i(x_0) = (-r+(L-1)-(i-1)) - (q_0-k) \beta^2, \quad i = (k-1)L + 1, \ldots, kL$$
	until $k=L$. When applying $f_{L,S}$ once more the orbit intersects $I_{L+1}$ instead of $I_L$, i.e. 
	$$f_{L,S}^{L^2+1} = (-r+(L-1))\beta - (q_0-L-1)\beta^2 \in I_{L+1}.$$
	For the next $S-1$ iterations the term $\beta^2$ is added such that
	$$f_{L,S}^{L^2+1+i} = (-r+(L-1))\beta - (q_0-L-1+i)\beta^2, \quad i = 1, \ldots, S-1.$$
	Since $f_{L,S}^{L^2+S} \in I_{K+S}$, the orbit jumps back to $I_L$ next. The distance between $f_{L,S}^{L^2+S+1}(x_0)$ and $(L-1)\beta$ is less than $\beta^2$ and we have
	$$f_{L,S}^{L^2+S+i} = (-r + (L-1) - i) \beta - (q_0-L-1)\beta^2 , \quad i = 1, \ldots, L.$$
	Hence, after $L^2+L+S$ applications of $f_{L,S}$ the orbit reaches again $I_1$ and $f_{L,S}^{L^2+S+1}(x_0) < \beta^2$. We are back in the same situation as at the start with $-r\beta-q\beta^2$ replaced by $-(r+1)\beta - (q_0-L-1)$ and it is clear that every $y \in \mathcal{J}_{L,S}$ is contained in the orbit of $f_{L,S}$. 
	 
\end{proof}

\begin{cor}
	If $L \geq S$, then there exists a ordering of $f_{L,S}^i(x_0)$ such that the corresponding sequence $(x_i)_{i \in \ZZ}$ is a low-discrepancy sequence.
\end{cor}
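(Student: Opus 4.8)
The plan is to read this off from Theorem~\ref{thm:orbit} together with the earlier theorem asserting that $\mathcal{J}_{L,S}$ can be ordered into a low-discrepancy sequence. By Theorem~\ref{thm:orbit} the orbit $\{ f_{L,S}^n(x_0) \mid n \in \ZZ \}$ coincides with $\mathcal{J}_{L,S}$ as a subset of $[0,1)$, so a suitable relabelling of the elements of $\mathcal{J}_{L,S}$ already yields a low-discrepancy sequence; all that remains is to transport that relabelling back to the orbit.

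Concretely, the first step I would carry out is to verify that the orbit map $h \colon \ZZ \to \mathcal{J}_{L,S}$, $n \mapsto f_{L,S}^n(x_0)$, is a bijection. It is surjective by Theorem~\ref{thm:orbit}. It is injective because $f_{L,S}$ is an interval exchange transformation, hence a bijection of $[0,1)$, so $f_{L,S}^a(x_0) = f_{L,S}^b(x_0)$ with $a \ne b$ would force $x_0$ to be $f_{L,S}$-periodic and its orbit finite, contradicting that $\mathcal{J}_{L,S}$ is dense in $[0,1]$ by Dirichlet's approximation theorem, hence infinite. Next, fix a low-discrepancy enumeration $g$ of $\mathcal{J}_{L,S}$ as provided by the earlier theorem. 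Then $h^{-1} \circ g$ is a bijection from the index set of $g$ onto $\ZZ$, and the orbit reordered along this bijection equals, term by term, the sequence $g$; hence it is low-discrepancy, and this is the ordering of $\{ f_{L,S}^i(x_0) \}$ claimed in the corollary. If one insists on a family literally indexed by $\ZZ$, one may compose once more with any bijection $\NN_0 \to \ZZ$, which does not affect the star-discrepancy of the initial segments.

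I do not anticipate a genuine obstacle here: the content of the corollary is already carried by Theorem~\ref{thm:orbit} and by the low-discrepancy ordering of $\mathcal{J}_{L,S}$, and the only point that truly needs checking is that the $f_{L,S}$-orbit of $x_0$ meets each point of $\mathcal{J}_{L,S}$ exactly once, i.e.\ that $x_0$ is not $f_{L,S}$-periodic. What is worth emphasising is conceptual rather than technical: low-discrepancy is a property of the chosen enumeration of the underlying point set, not of the order in which $f_{L,S}$ dynamically visits those points, which is precisely why the statement concerns the \emph{existence} of a suitable ordering and not the orbit sequence $\bigl(f_{L,S}^n(x_0)\bigr)_{n}$ itself.
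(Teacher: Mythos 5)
Your proposal is correct and follows essentially the same route the paper intends: the corollary is an immediate combination of Theorem~\ref{thm:orbit} (the orbit equals $\mathcal{J}_{L,S}$ as a set) with the earlier theorem giving a low-discrepancy ordering of $\mathcal{J}_{L,S}$. Your additional check that the orbit map $n \mapsto f_{L,S}^n(x_0)$ is injective (so the reordering is a genuine bijection) is a sensible refinement but does not change the argument.
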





\textsc{Hochschule Ruhr West, Duisburger Str. 100, D-45479 M\"ulheim an der Ruhr}\\
\textit{E-mail address:} \texttt{christian.weiss@hs-ruhrwest.de}


\begin{thebibliography}{xxx}
	\bibitem[Car12]{Car12} Carbone, I.: ``Discrepancy of $LS$-sequences of partitions and points'', Ann. Mat. Pura Appl. 191,
	\bibitem[CIV14]{CIV14} Carbone, I., Iac\`{o} M.,  Vol\v{c}i\v{c}, A.: ``A dynamical systems approach to the the Kakutani-Fibonacci sequence'', Ergodic Th. \& Dynam. Sys., 1794--1806 (2014).
	\bibitem[DT97]{DT97} Drmota, M., Tichy, R.: ``Sequences, Discrepancies and Applications'', Lecture Notes in Mathematics 1651, Springer, Berlin (1997).
	\bibitem[GHL12]{GHL12} Grabner, P., Hellekalek, P., Liardet, P.: ``The Dynamical Point of View of Low-Discrepancy Sequences'', Uniform Distribution Theory, 7 (1), 11--70 (2012).
	\bibitem[Kak76]{Kak76} Kakutani, S.: ``A problem on equidistribution on the unit interval $[0,1[$'', in: Measure Theory (Proc. Conf. Oberwolfach, 1975), Lecture Notes in Mathematics, 541, Springer, Berlin, 369--375 (1975).
	\bibitem[Lar14]{Lar14} Larcher, G.: ``Discrepancy estimates for sequences: new results and open problems'', in: Kritzer, P., Niederreiter, H., Pillichshammer, F., Winterhof, A. (eds.) Uniform Distribution and Quasi-Monte Carlo Methods, Radon Series in Computational and Applied Mathematics, 171--189. DeGruyter, Berlin (2014).
	\bibitem[KN74]{KN74} Kuipers, L., Niederreiter, H.: ``Uniform distribution of sequences'', John Wiley \& Sons, New York (1974).
	\bibitem[Mas82]{Mas82} Masur, H.: ``Interval exchange transformations and measured foliations'', Ann. of Math. (2) 115, no. 1, 169--200 (1982).
	\bibitem[Nie92]{Nie92} Niederreiter, H.: ``Random Number Generation and Quasi-Monte Carlo Methods'', Number 63 in CBMS-NSF Series in Applied Mathematics, SIAM, Philadelphia (1992).
	\bibitem[Sch72]{Sch72} Schmidt, W. M.: ``Irregularities of distribution VII'', Acta Arith., 21, 45--50 (1972). 
	\bibitem[Ste92]{Ste92} Steinig, J.: ``A proof of Lagrange's theorem on periodic continued fractions'', Arch. Math 59, 21-23 (1992).
	\bibitem[Vee78]{Vee78} Veech, W. A.: ``Interval Exchange Transformations'', J. Analyse Math., 33, 222--272 (1978). 
	\bibitem[Via06]{Via06} Viana, M.: ``Ergodic Theory of Interval Exchange Maps'', Rev. Mat. Complut 19(1), 7--100 (2006).
	\bibitem[Vol11]{Vol11} Vol\v{c}i\v{c}, A.: ``A generalization of Kakutani's splitting procedure'', Ann. Mat. Pura Appl. (4) 190(1),	45--54 (2011).
	\bibitem[Wei17]{Wei17} Wei\ss, C.: "On the Classification of LS-sequenves", arXiv:1706.08949 (2017).
	\bibitem[Yoc06]{Yoc06} Yoccoz, J.--C.: "Continued Fraction Algorithms for Interval Exchange Maps: an Introduction", in: \textit{Frontiers in number theory, physics, and geometry I}, 401--435, Springer (2006).

\end{thebibliography}
\end{document}